\title[Orientation preserving homeomorphisms with negative Jacobian]{Modulus of continuity of orientation preserving approximately differentiable
homeomorphisms with a.e. negative Jacobian}
\author{Pawe\l{} Goldstein}
\address{Pawe\l{} Goldstein\newline \indent Institute of Mathematics \newline \indent Faculty of Mathematics, Informatics and Mechanics\newline \indent University of Warsaw
\newline \indent Banacha 2\newline \indent 02-097 Warsaw, Poland\newline \indent {\tt goldie@mimuw.edu.pl}}
\author{Piotr Haj\l{}asz}
\address{Piotr Haj{\l}asz\newline \indent Department of Mathematics\newline \indent University of Pittsburgh\newline \indent 301
  Thackeray Hall\newline \indent Pittsburgh, PA 15260, USA\newline \indent {\tt hajlasz@pitt.edu}}
\thanks{P.G. was supported by FNP grant POMOST BIS/2012-6/3 \newline P.H.\ was supported by NSF grant DMS-1500647.}
\subjclass[2010]{Primary 46E35; Secondary 26B05, 26B10, 26B35, 74B20}
\keywords{approximately differentiable homeomorphisms, orientation preserving, H\"older condition, approximation}
\def\eps{\varepsilon}
\def\id{{\rm id\, }}
\def\A{{\mathcal A}}
\def\C{{\mathcal C}}
\def\Q{{\mathcal Q}}
\newtheorem{theorem}{Theorem}
\newtheorem{lemma}[theorem]{Lemma}
\newtheorem{corollary}[theorem]{Corollary}
\newtheorem{proposition}[theorem]{Proposition}
\def\diam{{\rm diam\,}}
\theoremstyle{definition}
\newtheorem{remark}[theorem]{Remark}
\newcommand{\barint}{
\rule[.036in]{.12in}{.009in}\kern-.16in \displaystyle\int }
\newcommand{\barcal}{\mbox{$ \rule[.036in]{.11in}{.007in}\kern-.128in\int $}}
\newcommand{\bbbn}{\mathbb N}
\newcommand{\bbbr}{\mathbb R}
\def\ap{\operatorname{ap}}
\def\diam{\operatorname{diam}}
\def\dist{\operatorname{dist}}
\def\mvint_#1{\mathchoice
          {\mathop{\vrule width 6pt height 3 pt depth -2.5pt
                  \kern -8pt \intop}\nolimits_{\kern -3pt #1}}%
          {\mathop{\vrule width 5pt height 3 pt depth -2.6pt
                  \kern -6pt \intop}\nolimits_{#1}}%
          {\mathop{\vrule width 5pt height 3 pt depth -2.6pt
                  \kern -6pt \intop}\nolimits_{#1}}%
          {\mathop{\vrule width 5pt height 3 pt depth -2.6pt
                  \kern -6pt \intop}\nolimits_{#1}}}
\numberwithin{theorem}{section} \numberwithin{equation}{section}
\begin{document}

\begin{abstract}
  We construct an a.e. approximately differentiable homeomorphism of a unit $n$-dimensional cube onto itself which is
	orientation preserving, has the Lusin property (N) and has the Jacobian determinant negative a.e. Moreover, the
	homeomorphism together with its inverse satisfy a rather general sub-Lipschitz condition, in particular it can be
	bi-H\"older continuous with an arbitrary exponent less than $1$.
\end{abstract}

\maketitle

\section{Introduction}
\subsection{The main result}
It is well known that in the case of diffeomorphisms the sign of the Jacobian $J_\Phi$ carries topological information about the mapping $\Phi$
in the sense that it tells us whether the diffeomorphism is orientation preserving or orientation reversing. In fact, it is not difficult to prove,
using the notion of degree, that if $\Phi:\Omega_1\to\Omega_2$ is a homeomorphism between domains in $\bbbr^n$ and if $\Phi$ is differentiable at points
$x_1,x_2\in\Omega_1$, then the Jacobian of $\Phi$ cannot be positive at $x_1$ and negative at $x_2$, see \cite[Theorem 5.22]{HenclK}.
In particular, if a homeomorphism between Euclidean domains is differentiable a.e., then either $J_\Phi\geq 0$ a.e. or $J_\Phi\leq 0$ a.e.
On the other hand it is easy to construct a homeomorphism that is differentiable a.e. and has the Jacobian equal zero a.e., see \cite{Takacs} and references therein.

Applications to areas like nonlinear elasticity \cite{Ball,MullerS,Sverak}, the theory of quasiconformal and quasiregular mappings \cite{rickman}, or the theory of
mappings of finite distortion \cite{HenclK},
lead to study of homeomorphisms much
less regular than diffeomorphisms. Yet, it is still important to understand how the topological properties of these mappings are related to the sign of the Jacobian.
In particular, Sobolev homeomorphisms need not be differentiable a.e. in the classical sense, although they are weakly and approximately differentiable a.e.

A measurable function $f:E\to\bbbr$, defined on a measurable set $E\subset\bbbr^n$, is said to be {\em approximately differentiable} at $x\in E$ if there is a measurable set $E_x\subset E$
and a linear function $L:\bbbr^n\to\bbbr$ such that $x$ is a density point of $E_x$ and
$$
\lim_{E_x\ni y\to x}
\frac{|f(y)-f(x)-L(y-x)|}{|y-x|} = 0.
$$
The mapping $L$ is called the approximate derivative of $f$ at $x$ and it is denoted by $\ap Df(x)$. The approximate derivative is unique (if it exists).
If a mapping $\Phi:E\subset\bbbr^n\to\bbbr^n$ is approximately differentiable at $x\in E$, we define the approximate Jacobian as
$J_\Phi=\det\ap D\Phi(x)$.

We will be interested in mappings that are approximately differentiable a.e.

Diffeomorphisms map sets of measure zero to sets of measure zero, so it is natural to consider a similar property for classes of more general mappings.
We say that a mapping $\Phi:\Omega\to\bbbr^n$, defined on an open set $\Omega\subset\bbbr^n$, has the {\em Lusin property (N)} if it maps sets of Lebesgue
measure zero to sets of Lebesgue measure zero.

Thus homeomorphisms that are approximately differentiable a.e. and have the Lusin property are far reaching generalizations of diffeomorphisms and yet it turns out that for such
mappings the classical change of variables formula is true \cite{Federer44,FedererBook,HajlaszChange}.
Homeomorphisms that belong to the Sobolev space $W^{1,p}$ are approximately differentiable a.e. but they do not necessarily have the Lusin property.
The Lusin property is a strong additional condition that plays an important role in geometric applications of Sobolev mappings.

In this context Haj\l{}asz, back in 2001, asked the following questions:
(see \cite[Section~5.4]{HenclK} and \cite[p. 234]{HenclM}):

\noindent
{\sc Question~1.}
{\em Is it possible to construct a homeomorphism $\Phi:(0,1)^n\to\bbbr^n$ which is approximately differentiable a.e., has the Lusin property (N) and
at the same time $J_\Phi>0$ on a set of positive measure and $J_\Phi<0$ on a set of positive measure?}

\noindent
{\sc Question~2.}
{\em Is it possible to construct a homeomorphism $\Phi:[0,1]^n\to [0,1]^n$ which is approximately differentiable a.e., has the Lusin property (N),
equals to the identity on the boundary (and hence it is sense preserving in the topological sense), but $J_\Phi<0$ a.e.?}

\noindent
{\sc Question~3.}
{\em Is it possible to construct a homeomorphism $\Phi:(0,1)^n\to\bbbr^n$ of the Sobolev class $W^{1,p}$, $1\leq p<n-1$, such that at the same time
$J_\Phi>0$ on a set of positive measure and $J_\Phi<0$ on a set of positive measure?}

The answer to Question~1 is in the positive and it has been known to the authors since 2001,
but it has not been published until very recently. Namely, in the paper \cite{GH} the authors answered in the positive both questions 1 and 2.
The Question~3 has also been answered in a sequence of surprising and deep papers  \cite{CHT,HenclM,HenclV}.
For further motivation for the problems considered here we refer the reader to papers \cite{CHT,GH,HenclM,HenclV}, especially to
\cite{GH}, because the results proved here are strictly related to those in \cite{GH}.

The {\em uniform metric} in the space of homeomorphisms of the unit cube $Q=[0,1]^n$ onto itself is defined by
\begin{equation}
\label{um}
d(\Phi,\Psi)=\sup_{x\in Q}|\Phi(x)-\Psi(x)|+\sup_{x\in Q}|\Phi^{-1}(x)-\Psi^{-1}(x)|.
\end{equation}

The main result of \cite{GH} reads as follows.

\begin{theorem}
\label{main-previous}
There exists an almost everywhere approximately differentiable homeomorphism $\Phi$ of the cube $Q=[0,1]^n$ onto itself, such that
\begin{itemize}
 \item[(a)] $\Phi|_{\partial Q}=\id$,
 \item[(b)] $\Phi$ is measure preserving,
 \item[(c)] $\Phi$ is a limit, in the uniform metric $d$, of a sequence of measure preserving $\C^\infty$-diffeomorphisms of $Q$ that are identity on the boundary,
 \item[(d)] the approximate derivative of $\Phi$ satisfies
 \begin{equation}
 \label{E2}
\ap D\Phi(x) =
\left[
\begin{array}{ccccc}
1       &      0       &   \ldots   &   0      &     0      \\
0       &      1       &   \ldots   &   0      &     0      \\
\vdots  &   \vdots     &   \ddots   &  \vdots  &    \vdots  \\
0       &      0       &   \ldots   &   1      &     0      \\
0       &      0       &   \ldots   &   0      &    -1      \\
\end{array}
\right]
\quad
\text{a.e. in $Q$}.
 \end{equation}
 \end{itemize}
\end{theorem}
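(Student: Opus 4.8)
The plan is to construct $\Phi$ by an explicit, infinitely iterated ``folding'' scheme and to read off \eqref{E2} from the local structure of the limit map on a set of full measure. A preliminary remark shows why this is delicate. If $n=1$ then $\int_0^1\Phi'=\Phi(1)-\Phi(0)=1$ rules out $\Phi'=-1$ a.e., so necessarily $n\ge 2$. For $n\ge 2$, no homeomorphism of $Q$ that fixes $\partial Q$ and is classically differentiable almost everywhere can satisfy \eqref{E2}: by \cite[Theorem~5.22]{HenclK} its Jacobian would have a.e.\ constant sign, which by \eqref{E2} is negative, contradicting the orientation-preserving character of a homeomorphism that equals $\id$ on $\partial Q$. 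Hence $\Phi$ must be classically \emph{nowhere} differentiable; every approximating diffeomorphism in (c) will have Jacobian $+1$; and the reflection in \eqref{E2} can be realised \emph{exactly} only on a Cantor-type set of full measure and empty interior, and merely approximately in its complement. The negative sign in \eqref{E2} thus appears only in the limit, through oscillation.

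Step~1 (the folding block). The core is a lemma producing, for every axis-parallel box $R\subset\bbbr^n$ and every $\eta>0$, a measure-preserving homeomorphism $g$ of $R$ onto itself, equal to the identity near $\partial R$, with $\|g-\id\|_\infty+\|g^{-1}-\id\|_\infty<\eta$, which on a compact set $K\subset R$ with $|R\setminus K|<\eta|R|$ coincides, piece by piece, with affine reflections $x\mapsto\operatorname{diag}(1,\dots,1,-1)\,x+b_i$; moreover $R\setminus K$ is a countable disjoint union of boxes $G_m$, each sent by $g$ onto a box of the same dimensions, again related to it by such an affine reflection, so that the construction can be re-run inside every $G_m$. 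One builds $g$ by slicing $R$ into many very thin slabs perpendicular to the $x_n$-axis; in the interior of each slab one places a fat Cantor set (full in the transverse variables) on which $g$ is the reflection about that slab's mid-hyperplane, while the gaps of these Cantor sets, together with thin ``transition'' slabs near the slab interfaces, carry a map that re-aims and glues the pieces, exploiting the transverse coordinates $x_1,\dots,x_{n-1}$ to route images around and keep $g$ injective. The slabs are taken thin enough that $g$ is $\eta$-close to the identity; volume preservation on the gluing region is restored by a correction of Dacorogna--Moser type; and near $\partial R$ the slabs are inset beneath a shrinking apron on which $g$ tapers back to the identity.

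Step~2 (iteration and passage to the limit). Apply Step~1 with $R=Q$ and $\eta=\eta_1$; inside each resulting gap-box apply it again with $\eta=\eta_2$, leaving the already-finalised reflections untouched; continue with $\eta_j\downarrow 0$ chosen so rapidly that the diameters of the stage-$j$ boxes and the numbers $\|g_j-\id\|_\infty$ are summable. The ``not-yet-finalised'' set at stage $j$ has measure below $\eta_1\cdots\eta_j\to 0$, so in the limit $\Phi$ is a piecewise affine reflection on a set $K_\infty$ of full measure and empty interior. Uniform control of the nested box structure shows that the construction converges to a homeomorphism $\Phi$ of $Q$ onto $Q$ with continuous inverse and $\Phi|_{\partial Q}=\id$, which is (a). Since near each point of $K_\infty$ the map $\Phi$ agrees with an affine reflection along a set having that point as a density point, $\Phi$ is approximately differentiable there with $\ap D\Phi=\operatorname{diag}(1,\dots,1,-1)$; this is (d), and in particular $\Phi$ is approximately differentiable a.e. Because $|\det\ap D\Phi|=1$ on $K_\infty$ and $|Q\setminus K_\infty|=0$, a short change-of-variables argument shows that $\Phi$ and $\Phi^{-1}$ preserve measure, giving (b), and also yields the Lusin property (N).

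Step~3 ((c): smooth approximation). Here one approximates $\Phi$ itself in $C^0$ by volume-preserving $C^\infty$-diffeomorphisms of $Q$ fixing $\partial Q$: in each region where, at a chosen scale, $\Phi$ is well-approximated by the reflection of a thin slab about its mid-hyperplane, one substitutes a fine volume-preserving ``accordion'' diffeomorphism of that slab that is the identity near its boundary --- possible in $C^0$ exactly because the slab is thin, so the reflection displaces points only slightly --- then smooths the gluing maps and applies a Moser correction to restore $\det\equiv1$; refining the scale produces a sequence converging uniformly to $\Phi$, with inverses converging as well. I expect Step~1 to be the main obstacle: a single folding must be a volume-preserving homeomorphism fixing a neighbourhood of $\partial R$ yet equal to a genuine reflection on a set of positive measure --- requirements in direct tension, since a slab reflection interchanges the slab's faces while ``identity near the boundary'' forbids exactly that. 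The tension is resolved only by using many thin slabs, finalising reflections solely on interior Cantor sets, and routing the remainder through thin transition regions in the transverse directions; on top of this one must keep the self-similar bookkeeping consistent --- tracking the offsets $b_i$ so that refining a gap-box never disturbs the reflections already finalised around it --- and all perturbations uniformly small, so that the infinite iteration converges to a homeomorphism with continuous inverse.
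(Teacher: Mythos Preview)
First, a framing remark: Theorem~\ref{main-previous} is not proved in the present paper --- it is quoted as the main result of \cite{GH}. The paper only tells us that the proof in \cite{GH} relies on Dacorogna--Moser \cite{DM} to obtain (b) and (c), and it develops closely related constructions (Proposition~\ref{prop:main}, Lemma~\ref{est:lip}) whose basic mechanism is a diffeomorphism that \emph{exchanges} top and bottom layers of dyadic sub-cubes by rigid translation. So your proposal should be compared to that mechanism.

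Your overall plan --- iterate a folding block that equals a local affine reflection on a fat Cantor set, glue in the gaps, and use Dacorogna--Moser for volume preservation --- is in the right spirit, and your preliminary remarks are correct. But there is a genuine gap in Step~2. Your Step~1 lemma yields $g:R\to R$ with $g=\id$ near $\partial R$, $g$ an affine reflection on $K$, and $g$ equal to a complicated ``re-aiming'' homeomorphism on each gap box $G_m$ (you say so explicitly). When you ``apply it again'' inside $G_m$ you get $h_m:G_m\to G_m$ with $h_m=\id$ near $\partial G_m$ and $h_m$ a reflection on some $K_m\subset G_m$. For the stage-2 map to be continuous and to leave the finalised reflections on $K$ untouched, you are forced to set $\Phi_2|_{G_m}=g\circ h_m$. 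On $K_m$ this gives $\ap D\Phi_2 = Dg\cdot\operatorname{diag}(1,\dots,1,-1)$; since $g|_{G_m}$ is your non-affine gluing map, $Dg$ is not the identity, and $\ap D\Phi_2$ is \emph{not} the reflection matrix. Iterated, your scheme produces $J_\Phi<0$ a.e.\ (the conclusion of Theorem~\ref{thm:main}) but not \eqref{E2}. Knowing only that $g(G_m)$ is a congruent box is not enough to ``re-run the construction''; you need control of $g|_{G_m}$ itself.

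The repair --- and this is exactly what the cube-exchange construction of Lemma~\ref{est:lip} buys --- is to arrange that the stage-$k$ map, restricted to each box in which you will iterate, is already a \emph{rigid translation}. Then composing with the next-stage exchange again yields an affine map with derivative $\operatorname{diag}(1,\dots,1,-1)$ on the new Cantor set, and (d) survives the iteration. Your thin-slab reflection picture does not provide this: a map that is the identity near $\partial R$, a reflection on $K$, and a translation on every $G_m$ with $K\cup\bigcup_m G_m=R$ would have to be globally affine, which it cannot be. You would need either a different geometry for Step~1 (so that the gap pieces carry rigid motions, as in the top/bottom cube exchange), or a substantially stronger replacement lemma that builds, inside each $G_m$, a map matching the \emph{given} boundary values $g|_{\partial G_m}$ while equalling the affine reflection on a large subset.
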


Note that (b) implies the Lusin condition (N).
The proof given in \cite{GH} does not give any estimates for the modulus of continuity of the homeomorphism $\Phi$. One of our main concerns in Theorem~\ref{main-previous} were the conditions (b) and (c).
Satisfying them required the use of results of Dacorogna and Moser \cite{DM}, which do not provide any reasonable estimates for the modulus of continuity. So what can we say about the regularity of $\Phi$
if we drop the conditions (b) and (c)?
While $\Phi$ cannot be
Lipschitz continuous (because Lipschitz continuous functions are differentiable a.e.), it is natural to ask how close the modulus of continuity of $\Phi$
can be to the Lipschitz one. For example, can $\Phi$ be H\"older continuous?

A positive answer is given in the next result which is the main result of the paper.

\begin{theorem}
\label{thm:main}
Assume $\phi:[0,\infty)\to[0,\infty)$ satisfies the following conditions:
\begin{itemize}
\item[(1)] $\phi$ is increasing, concave, continuous and $\phi(0)=0$,
    \medskip
\item[(2)]
$\displaystyle
\int_{0}^{1}\frac{ds}{\phi(s)}<\infty$.
\medskip
\item[(3)] $t \mapsto t^{-\alpha} \phi(t)$ is increasing on $(0,1)$ for some $\alpha\in(0,1)$.
\end{itemize}
Then there exists a homeomorphism $F$ of the cube $Q=[0,1]^{n}$ onto itself such that
\begin{itemize}
\item $F|_{\partial Q}=\id$,
\item $F$ has the Lusin property (N),
\item $F$ is approximately differentiable a.e.,
\item the approximate Jacobian $J_{F}$ is negative a.e. in $Q$,
\item for any $x,y\in Q$ we have $|F(x)-F(y)|+|F^{-1}(x)-F^{-1}(y)|\leq C(n,\phi)\phi(|x-y|)$.
\end{itemize}
\end{theorem}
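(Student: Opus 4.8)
The plan is to build $F$ as an infinite composition (or, equivalently, a carefully controlled uniform limit) of elementary "folding" homeomorphisms supported on a Cantor-type arrangement of small cubes, mimicking the construction behind Theorem~\ref{main-previous} but keeping track of moduli of continuity. The starting point is the model map: on a single cube one can write down an explicit orientation-preserving homeomorphism that is the identity on the boundary, is smooth off a small slab, and whose approximate derivative on a large-measure subset is the reflection matrix in \eqref{E2} — this is the "one-step" gadget that flips the sign of the Jacobian on most of the cube while costing a controlled amount of oscillation. The key quantitative point is that this gadget, rescaled to a cube of side $\delta$, has modulus of continuity comparable to $\delta$ times a fixed profile, so that its contribution to $|F(x)-F(y)|$ on that scale is $\lesssim \delta$; the same must hold for the inverse, which is why the gadget is chosen symmetric enough that $F$ and $F^{-1}$ are handled on equal footing.

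First I would fix, for a rapidly decreasing sequence of scales $\delta_k\to 0$, a packing of $Q$ by essentially disjoint cubes of side $\delta_k$ at stage $k$ (a self-similar Cantor-like scheme), and define $F_k$ to be the composition of the rescaled gadgets over all stage-$k$ cubes, each being the identity outside its own cube so that the composition over one stage is itself a homeomorphism equal to the identity on $\partial Q$. Then $F=\lim_k F_m\circ\cdots\circ F_1$. Two things must be arranged simultaneously: (i) the sequence and its inverses converge uniformly — here condition (2), $\int_0^1 ds/\phi(s)<\infty$, is what lets the scales $\delta_k$ be chosen so summably small that $\sum_k \delta_k \cdot(\text{number of cubes per unit volume})^{0}$-type corrections are finite and the limit is a genuine homeomorphism with the Lusin property (N); and (ii) on a set of full measure every point eventually lands, at some finite stage, inside a cube where the accumulated approximate derivative is exactly the reflection matrix, so $J_F<0$ a.e. The Lusin property (N) follows because each $F_k$ is bi-Lipschitz and the distortion is controlled uniformly enough that null sets stay null in the limit — this needs the quantitative bound from condition (3).

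The modulus-of-continuity estimate is the heart of the matter. For $x,y\in Q$ with $|x-y|=t$, I would pick the stage $k$ with $\delta_k \approx t$ and split the displacement $F(x)-F(y)$ into: the effect of stages $j<k$ (coarse scales), where the maps are bi-Lipschitz with constant roughly $\prod_{j<k}(1+c_j)$ and must not blow up — controlled by condition (3), which forces $\phi(t)/t^\alpha$ increasing and hence lets the per-stage Lipschitz constants be taken so that their product is $\lesssim (\delta_k/\delta_j)^{-\text{something}}$ summable; and the effect of stages $j\ge k$ (fine scales), each of which moves points by at most $O(\delta_j)$, so their total contribution is $\sum_{j\ge k}\delta_j \lesssim \delta_k \approx t \lesssim \phi(t)$ since $\phi$ is concave with $\phi(0)=0$ (so $\phi(t)\gtrsim c\,t$ near $0$ is false in general — rather one uses $t\le \phi(t)/\phi(1)\cdot$const is also false; the correct statement is that concavity gives $\phi(t)\ge t\,\phi(1)$ for $t\le 1$, so indeed $t\lesssim \phi(t)$). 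Combining, $|F(x)-F(y)|\lesssim \phi(t)$, and symmetry of the construction gives the same for $F^{-1}$.

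The main obstacle I anticipate is the delicate bookkeeping in the coarse-scale estimate: one must choose the scales $\delta_k$, the packing densities, and the single-cube gadget's Lipschitz constant in mutual balance so that (a) the infinite product of bi-Lipschitz constants converges (needs the gap between consecutive scales to be large, governed by condition (2) via the integral test applied to $\sum 1/\phi(\delta_k)$-type series), (b) the "bad set" where the Jacobian is not yet $-1$ at stage $k$ shrinks to measure zero, and (c) the resulting modulus is genuinely $\le C\phi(t)$ and not merely $\le C\phi(t)^{1-\epsilon}$. Hypothesis (3), the regularity $t\mapsto t^{-\alpha}\phi(t)$ increasing, is precisely the doubling-type condition that makes the coarse-scale geometric series summable; without it the product of Lipschitz constants could diverge. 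Verifying that all three constraints can be met with a single choice of parameters — and that the same parameters work for $F^{-1}$ — is where the real work lies, the rest being adaptations of the construction already used for Theorem~\ref{main-previous}.
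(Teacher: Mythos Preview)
Your proposal contains a genuine gap at its core. The ``one-step gadget'' you describe---an orientation-preserving homeomorphism of a cube, identity on the boundary, with approximate derivative equal to the reflection matrix on a set of positive measure---\emph{cannot} be bi-Lipschitz: a bi-Lipschitz map is differentiable a.e., and an orientation-preserving homeomorphism differentiable a.e.\ has $J\ge 0$ a.e. So the phrase ``each $F_k$ is bi-Lipschitz'' and the whole plan of controlling a product of bi-Lipschitz constants for the coarse stages is inoperative. If instead you only assume the unit-scale gadget has modulus $C\phi$, then the rescaled copy on a cube of side $\delta_j$ has modulus $t\mapsto C\,\delta_j\,\phi(t/\delta_j)$, and concavity gives only $\delta_j\,\phi(t/\delta_j)\le \phi(t)$---no gain at all. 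Thus each of the infinitely many nested stages contributes $C\phi(|x-y|)$ to the modulus, and the sum diverges. This is exactly the obstruction the paper flags in the introduction (``the constant $C_k$ diverges to $\infty$'').

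The idea you are missing is the paper's use of condition~(3), which is \emph{not} a doubling condition for a geometric-series estimate on Lipschitz constants. Instead, (3) is used (Lemma~\ref{better}) to manufacture an auxiliary modulus $\psi$ satisfying (1), (2) \emph{and} $\psi(t)/\phi(t)\to 0$ as $t\to 0^+$. One then builds the basic gadget $\Phi$ of Proposition~\ref{prop:main} with modulus $C\psi$ (this needs only (1) and (2) for $\psi$). At step $k+1$ one inserts rescaled copies of $\Phi$ into balls so small that on their diameter scale $C\psi(t)\le 2^{-k}\phi(t)$; the $\phi$-modulus of $F_{k+1}$ then exceeds that of $F_k$ by at most $2^{-k}\phi$, and the bounds accumulate as $\big(1+\tfrac12+\cdots+\tfrac1{2^k}\big)C(n,\phi)\,\phi$. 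Your fine-scale observation ($\sum_{j\ge k}\delta_j\lesssim t\lesssim\phi(t)$ by concavity) is correct but is the easy half; the coarse-scale half needs the $\psi$-vs-$\phi$ slack, not a Lipschitz product.
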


Here $C(n,\phi)$ denotes a constant that depends on $n$ and $\phi$ only.

The function
$\phi(t)=Ct^\beta$, $\beta\in (0,1)$ satisfies properties (1), (2) and (3), so both $\Phi$ and $\Phi^{-1}$ can be $\beta$-H\"older continuous at the same time.
Also $\phi(t)=t\log^2 t$ satisfies (1) near zero and it can be extended to $[0,\infty)$  in a way that all conditions (1), (2) and (3) are satisfied.
It is easy to see that in that case $\Phi$ and $\Phi^{-1}$ are
H\"older continuous with any exponent $\beta<1$. In particular $\Phi$ and $\Phi^{-1}$ can be in the fractional Sobolev space $W^{s,p}$ for all $0<s<1$ and $1<p<\infty$.

Condition (2) is the main estimate describing the growth of the function $\phi$, while (1) simply means that $\phi$ is a modulus of continuity and (3) is of  technical nature.
Condition (3) implies that $\phi$ is  a modulus that is at least H\"older  with some positive exponent $\alpha$. Since, in applications, we are interested in moduli very
close to Lipschitz, the condition (3) is not restrictive.

The proof of Theorem~\ref{thm:main} involves lemmata of purely technical nature; they are collected in Section~\ref{aux}. In order to give motivation
for these technical arguments, we will describe now the
main ideas of the proof of Theorem~\ref{thm:main}.

\subsection{How to prove the main result}
The main building block in the proof of Theorem~\ref{thm:main} is the following result.
\begin{proposition}
\label{prop:main}
Assume $\phi:[0,\infty)\to[0,\infty)$ satisfies
\begin{itemize}
\item[(1)]  $\phi$ is increasing, concave, continuous and $\phi(0)=0$,
    \smallskip
\item[(2)]$\displaystyle
\int_{0}^{1}\frac{ds}{\phi(s)}<\infty$.

\end{itemize}
Then there exists an a.e. approximately differentiable homeomorphism $\Phi$ with the Lusin property (N) of the cube $Q=[0,1]^{n}$ onto itself, and a compact set $A$ in the interior of $Q$, such that
\begin{itemize}
\item $\Phi|_{\partial Q}=\id$,
\item $|A|>0$,
\item $\Phi$ is a reflection $(x_1,\ldots,x_{n-1},x_n)\mapsto (x_1,\ldots,x_{n-1},1-x_n)$ on $A$, $\Phi(A)=A$, and $\Phi$ is a $C^\infty$-diffeomorphism outside $A$,
\item at almost all points of the set $A$
\begin{equation}
\label{E4}
\ap D\Phi(x) =
\left[
\begin{array}{ccccc}
1       &      0       &   \ldots   &   0      &     0      \\
0       &      1       &   \ldots   &   0      &     0      \\
\vdots  &   \vdots     &   \ddots   &  \vdots  &    \vdots  \\
0       &      0       &   \ldots   &   1      &     0      \\
0       &      0       &   \ldots   &   0      &    -1      \\
\end{array}
\right]\, ,
\end{equation}
\item for any $x,y\in Q$ we have $|\Phi(x)-\Phi(y)|+|\Phi^{-1}(x)-\Phi^{-1}(y)|\leq C(n,\phi)\phi(|x-y|)$.
\end{itemize}
\end{proposition}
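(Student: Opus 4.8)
The plan is to build $\Phi$ as a uniform limit of $C^\infty$-diffeomorphisms, reducing the whole construction to a single ``elementary move'' performed on ever finer scales. Concretely, I will produce a nested sequence of finite families of pairwise disjoint closed cubes $\mathcal Q_0=\{Q\}\supset\mathcal Q_1\supset\mathcal Q_2\supset\cdots$, all contained in the interior of $Q$, and $C^\infty$-diffeomorphisms $\psi_k$ of $Q$, each equal to the identity outside $\bigcup\mathcal Q_{k-1}$, such that $\psi_k$ acts inside each cube $Q'\in\mathcal Q_{k-1}$ by relocating, rigidly, the cubes of $\mathcal Q_k$ contained in $Q'$. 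Setting $\Phi_k=\psi_k\circ\cdots\circ\psi_1$, the set $A$ is the compact set of points whose $\Phi_k$-orbit never leaves the families, $A=\bigcap_k\Phi_k^{-1}(\bigcup\mathcal Q_k)$, and the families together with the moves will be chosen so that on $\bigcup\mathcal Q_k$ the map $\Phi_k$ coincides with the target reflection $\rho(x_1,\dots,x_{n-1},x_n)=(x_1,\dots,x_{n-1},1-x_n)$ up to an error of order $\operatorname{diam}\mathcal Q_k$; since the later moves alter $\Phi_k$ on $\bigcup\mathcal Q_k$ by at most $\sum_{j>k}\|\psi_j-\operatorname{id}\|_\infty\lesssim\operatorname{diam}\mathcal Q_k$, passing to the limit forces $\Phi|_A=\rho|_A$, and by arranging $\rho(\mathcal Q_k)=\mathcal Q_k$ we get $\rho(A)=A$, hence $\Phi(A)=A$. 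Several conclusions are then automatic: off $A$ every point eventually leaves the nested cubes, so near it only finitely many $\psi_k$ are non-trivial and $\Phi$ is a finite composition of $C^\infty$-diffeomorphisms, hence a $C^\infty$-diffeomorphism of $Q\setminus A$, classically differentiable and with the Lusin property there; on $A$ the map equals the affine, bi-Lipschitz $\rho$, so taking $E_x=A$ in the definition of approximate differentiability gives, at every density point $a\in A$ (that is, at a.e.\ $a\in A$), $|\Phi(y)-\Phi(a)-D\rho(y-a)|=|\rho(y)-\rho(a)-D\rho(y-a)|=0$ for $y\in A$, i.e.\ $\ap D\Phi(a)=D\rho=\operatorname{diag}(1,\dots,1,-1)$, whence $J_\Phi<0$ a.e.\ on $A$; and the Lusin property on all of $Q$ follows since for a null set $E$ the set $\Phi(E)=\rho(E\cap A)\cup\Phi(E\setminus A)$ is null.

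The elementary move is the geometric heart of the matter, and it is here that the technical lemmata of Section~\ref{aux} are used. Inside a cube $Q'\in\mathcal Q_{k-1}$ one must, by a diffeomorphism that is the identity near $\partial Q'$, slide the sub-cubes of $\mathcal Q_k\cap Q'$ from their current positions into the positions that $\rho$ prescribes for the next stage; although this composite motion realizes in the limit the orientation-reversing map $\rho$, each $\psi_k$ is an orientation-preserving isotopy. This is possible because for $n\ge 2$ a well-separated family of small cubes inside a large cube can be permuted and reflected at will by sliding the cubes past one another, and --- crucially --- the ``room'' that the motion consumes (the transition collar rendered unusable at later stages) can be made an arbitrarily small fraction of $|Q'|$ by taking the sub-cubes small and numerous, i.e.\ by letting $\operatorname{card}(\mathcal Q_k\cap Q')\to\infty$ with $k$. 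The lemmata supply such moves together with the two quantitative controls that drive the rest of the argument: the $L^\infty$-bound $\|\psi_k-\operatorname{id}\|_\infty\lesssim\operatorname{diam}\mathcal Q_{k-1}$, and a distortion bound showing that the collar of $\psi_k$ may be chosen of absolute width comparable to any prescribed $w_k>0$, at the cost that $\psi_k$ and $\psi_k^{-1}$ have oscillation at most of order $\operatorname{diam}\mathcal Q_{k-1}$ on balls of radius $\gtrsim w_k$.

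The quantitative core is then the choice of the parameters of the families --- the side-lengths $s_k=\operatorname{diam}\mathcal Q_k$, the cardinalities, the collar widths $w_k$ --- so as to reconcile two competing demands. First, to keep $\Phi$ and $\Phi^{-1}$ within the modulus $C(n,\phi)\phi$ one needs, roughly, the stage-$k$ collar to have absolute width $w_k\gtrsim\phi^{-1}(s_{k-1})$, for otherwise two points at distance $\sim w_k$ on opposite sides of such a collar could be driven apart by $\Phi$ to distance $\sim s_{k-1}>\phi(w_k)$. Second, since the relative measure lost to the collar of each cube of $\mathcal Q_{k-1}$ is of order $w_k/s_{k-1}$ and the moves are rigid, $|A|=\prod_k\bigl(1-O(w_k/s_{k-1})\bigr)$, so $|A|>0$ requires $\sum_k w_k/s_{k-1}<\infty$. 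Choosing $s_k\downarrow 0$ suitably (using monotonicity and continuity of $\phi$) and $w_k\sim\phi^{-1}(s_{k-1})$, the series $\sum_k w_k/s_{k-1}=\sum_k \phi^{-1}(s_{k-1})/s_{k-1}$ becomes, after the change of variable $u=\phi^{-1}(\cdot)$, comparable to $\int_0^1 ds/\phi(s)$; thus hypothesis (2) is precisely what makes the modulus requirement and $|A|>0$ simultaneously achievable. With these parameters the per-scale bound $\osc(\Phi,r)\lesssim s_{k-1}\le C\phi(w_k)\le C\phi(r)$ holds for $r\in[w_k,s_{k-1}]$, and since these intervals cover $(0,s_0]$ (plus the geometrically small finer-scale contributions) one gets $\osc(\Phi,r)\lesssim\phi(r)$ at every scale; concavity of $\phi$ with $\phi(0)=0$ then upgrades this to $|\Phi(x)-\Phi(y)|\le C(n,\phi)\phi(|x-y|)$ for all $x,y\in Q$. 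The same estimate for the inverses forces $\Phi_k^{-1}$ to converge uniformly, and combined with the equicontinuity of $\{\Phi_k\}$ and $\{\Phi_k^{-1}\}$ (from the uniform modulus) this shows the limit $\Phi$ is a homeomorphism with continuous inverse; as each $\psi_k$ is the identity near $\partial Q$, we have $\Phi|_{\partial Q}=\operatorname{id}$.

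The main obstacle is everything packed into the lemmata of Section~\ref{aux}: constructing the elementary moves so that, at each scale and for the move together with its inverse, one simultaneously (i) realizes the orientation-reversing reflection $\rho$ as a limit of orientation-preserving diffeomorphisms --- the topological mechanism, which is the technical core already present in \cite{GH} --- (ii) keeps the transition collar as thin as $\phi^{-1}(s_{k-1})$, so as not to destroy $|A|>0$, and (iii) keeps the oscillation of the move comparable to $s_{k-1}$ on balls of radius $\gtrsim\phi^{-1}(s_{k-1})$, so as to secure the modulus $\phi$ under the sole growth hypothesis (2). Everything else --- uniform convergence, the homeomorphism property, the diffeomorphism property off $A$, the identity on $\partial Q$, the value of $\ap D\Phi$ on $A$ and the sign of $J_\Phi$, and the Lusin property --- then follows by the routine arguments sketched above.
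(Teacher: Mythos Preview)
Your approach is essentially the paper's: a Cantor set $A$ built from nested cube families, elementary box-exchange diffeomorphisms at each scale whose composition converges uniformly to a homeomorphism equal to the reflection $\rho$ on $A$ and a $C^\infty$-diffeomorphism off $A$. The paper uses exactly $2^n$ sub-cubes per parent (your variable cardinality is harmless) and the explicit scale choice of Lemma~\ref{ap:est ak}, which makes the relevant series telescope rather than merely be ``comparable to $\int_0^1 ds/\phi(s)$''.

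One step in your modulus sketch does not close as written. The claim ``$\osc(\Phi,r)\lesssim s_{k-1}$ for $r\in[w_k,s_{k-1}]$'' fails for $r$ near the top of that interval: two points at distance $r\approx s_{k-1}$ need not lie in the same level-$(k-1)$ cube (one may sit near $\partial Q'$), so the image bound available is only $s_{k-2}$, not $s_{k-1}$. The paper avoids this by slicing according to the collar-width intervals $(\beta_{m+1}/10,\beta_m/10]$ rather than $[w_k,s_{k-1}]$; then $|x-y|\le\beta_m/10$ forces $x,y$ into a common level-$(m-1)$ ancestor. But this produces a residual case --- distances smaller than the \emph{finest} active collar, i.e.\ $m\ge k$ when estimating $\Phi_k$ --- where one must use the Lipschitz constant $\lambda_k=\alpha_{k-1}/\beta_k$ of the current level and then compare it to $\lambda_m$. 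That comparison is exactly Lemma~\ref{ap:est l} ($\sup_{\ell<k}\lambda_\ell\le C(\phi)\lambda_k$), which your sketch does not mention and which is not automatic: the ratios $s_{k-1}/w_k$ can oscillate wildly for general $\phi$. The remedy is either to choose $s_k$ geometrically (so $s_{k-2}\lesssim s_{k-1}$ and the off-by-one level does no harm) or to prove an analogue of Lemma~\ref{ap:est l}; without one of these, the per-scale oscillation argument does not assemble into a uniform modulus bound.
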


Note that we no longer require the technical condition (3) from Theorem \ref{thm:main}.

The set $A$ in Proposition~\ref{prop:main} is a Cantor set of positive measure defined in a standard way as the intersection of a nested family of cubes inside $Q$.
The homeomorphism $\Phi$ is a diffeomorphism outside $A$ and it is a mirror reflection when restricted to $A$.
It easily follows that $\Phi$ has the Lusin property and that \eqref{E4} is satisfied at all density points of $A$.

The homeomorphism $F$ from Theorem~\ref{thm:main} is constructed as a limit of a sequence of homeomorphisms $F_k$. The first homeomorphism $F_1=\Phi$
is defined as in Proposition~\ref{prop:main}. It has negative Jacobian in the set $K_1=A$.
Note that in the complement of $A$, $F_1$ is a diffeomorphism, so in a small neighborhood of each point in $Q\setminus A$, $F_1$ is almost affine.
Modifying $F_1$ slightly we can change it to a homoeomorphism $\tilde{F}_1$ that is affine
in many small cubes $\{Q_i\}_i$ in the complement of $A$.

A generic modification of a diffeomorphism in a way that it becomes affine in a neighborhood of a point is described in Lemma~\ref{goodapprox}.

Now, in each cube $Q_i$ we replace the affine map $\tilde{F}_1$ by a suitably rescaled version of the homeomorphism $\Phi$ from
Proposition~\ref{prop:main}; it is rescaled in a way
that it coincides with the affine map $\tilde{F}_1$ near the boundary of $Q_i$. The resulting mapping $F_2$ coincides with $F_1$ in a
neighborhood of $A$ so it has negative Jacobian on $A$. Also
in each cube $Q_i\subset Q\setminus A$, $F_2$ is a rescaled version of the homeomorphism from Proposition~\ref{prop:main} and hence it has 
negative Jacobian on a compact set $A_i\subset Q_i$ of positive measure.
Thus $F_2$ has negative Jacobian on a compact set $K_2=A\cup\bigcup_i A_i$. Clearly $K_1\subset K_2$.
The homeomorphism $F_3$ is constructed in a way that it coincides
with $F_2$ near $K_2=A\cup\bigcup_i A_i$ (and hence it has negative Jacobian on $K_2$) and in the complement of that set it has
negative Jacobian on compact subsets of many tiny cubes.
Thus $F_3$ has negative Jacobian on a compact set $K_3$ that contains $K_2$.
We construct homeomorphisms $F_4, F_5,\ldots$ and an increasing sequence of compact sets $K_4, K_5,\ldots$ in a similar manner.
The construction guarantees that $|Q\setminus \bigcup_{k=1}^\infty K_k|=0$.
The sequence of of homeomorphisms $\{ F_k\}$ is a Cauchy sequence with respect to the uniform metric $d$.
It is well known and easy to check that the space of homeomorphisms of $Q$ onto itself is complete with respect to the
uniform metric $d$ (see \cite[Lemma~1.2]{GH}) so the sequence $\{ F_k\}$ converges to a homeomorphism $F$.
Since $|Q\setminus \bigcup_{k=1}^\infty K_k|=0$ and $F_k$ has negative Jacobian on $K_k$,  it follows that $F$ has negative Jacobian almost everywhere.

Each of the homeomorphisms $F_k$ satisfies the continuity condition
\begin{equation}
\label{y40}
|F_k(x)-F_k(y)|+|F_k^{-1}(x)-F_k^{-1}(y)|\leq C_k\phi(|x-y|),
\end{equation}
because this is the estimate for $\Phi$ and $F_k$ contains many rescaled copies of $\Phi$. The problem is that
since the mappings became more and more complicated, the constant $C_k$ diverges to $\infty$ as $k\to\infty$ and hence the homeomorphism $F$
to which the homeomorphisms $F_k$ converge does {\em not} satisfy the desired estimate
$|F(x)-F(y)|+|F^{-1}(x)-F^{-1}(y)|\leq C\phi(|x-y|)$.

Note that in the argument above we never mentioned condition (3) from Theorem~\ref{thm:main}, since this condition is not needed for Proposition~\ref{prop:main}. Actually, condition (3) is used to overcome the problem with the
blow up of the estimates. Namely, we have
\begin{lemma}
Assume that
\begin{itemize}
\item[(1)] $\phi:[0,\infty)\to [0,\infty)$ is increasing, continuous, concave, $\phi(0)=0$,\\
\item[(2)] $\displaystyle \int_0^1 \frac{dt}{\phi(t)}<\infty$ and\\
\item[(3)] $t \mapsto t^{-\alpha} \phi(t)$ is increasing on $(0,1)$ for some $\alpha\in(0,1)$.
\end{itemize}
Then there exists $\psi$ such that
\begin{itemize}
\item[(a)] $\psi:[0,\infty)\to [0,\infty)$ is increasing, continuous, concave, $\psi(0)=0$,\\
\item[(b)]
$\displaystyle \int_0^1 \frac{dt}{\psi(t)}<\infty$ and\\
\item[(c)] $\displaystyle \lim_{t\to 0^+}\frac{\psi(t)}{\phi(t)}=0$.
\end{itemize}
\end{lemma}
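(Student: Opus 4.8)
The plan is to build $\psi$ from $\phi$ by a ``square root of the Dini integral'' trick and then repair concavity with a concave envelope. Write $\Psi(t)=\int_0^t ds/\phi(s)$; by (1)--(2) this is finite, continuous, of class $C^1$ on $(0,1]$, increasing and concave, with $\Psi(0)=0$ and $\Psi'=1/\phi$. Put
\[
\psi_0(t)=\phi(t)\sqrt{\Psi(t)} .
\]
Then $\psi_0$ is continuous and increasing on $[0,1]$, $\psi_0(0)=0$,
\[
\int_0^1\frac{dt}{\psi_0(t)}=\int_0^1\frac{\Psi'(t)}{\sqrt{\Psi(t)}}\,dt=2\sqrt{\Psi(1)}<\infty ,
\]
and $\psi_0(t)/\phi(t)=\sqrt{\Psi(t)}\to 0$ as $t\to 0^+$. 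Thus $\psi_0$ already has properties (b), (c), and all of (a) except possibly concavity. I will also use the elementary consequence of (1)--(2) that $\phi(t)/t\to\infty$ as $t\to 0^+$: it is nonincreasing by concavity, and a finite limit would give $\phi(t)\le Kt$ and $\int_0^1 dt/\phi=\infty$.

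Next, repair concavity. Fix $t_1\in(0,1)$ so small that $\Psi(t_1)<1$; then $\psi_0\le\phi$ on $[0,t_1]$. Let $\widehat\psi$ be the least concave majorant (concave envelope) of $\psi_0$ on $[0,t_1]$. It is concave, continuous and increasing on $[0,t_1]$, with $\widehat\psi(0)=\psi_0(0)=0$ and $\psi_0\le\widehat\psi\le\phi$ on $[0,t_1]$ --- the last inequality because $\phi$ is itself a concave function $\ge\psi_0$ there. Hence $\int_0^{t_1}dt/\widehat\psi\le\int_0^{t_1}dt/\psi_0<\infty$. Now define $\psi$ to equal $\widehat\psi$ on $[0,t_1]$ and to equal the tangent line of $\widehat\psi$ at $t_1$ on $[t_1,\infty)$. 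This $\psi$ is concave on $[0,\infty)$, continuous, $\psi(0)=0$, and increasing, the slope of the linear piece being $\widehat\psi'(t_1^-)>0$: indeed $\widehat\psi$ is not constant on any $[a,t_1]$ with $a<t_1$, since that would force $\psi_0(a)=\widehat\psi(a)=\widehat\psi(t_1)=\psi_0(t_1)$, contradicting monotonicity of $\psi_0$. As $1/\psi$ is bounded on $[t_1,1]$, (b) holds for $\psi$. So (a) and (b) are in hand, and only (c) remains to be checked.

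The crux is $\widehat\psi(t)/\phi(t)\to 0$ as $t\to 0^+$. Let $C=\{t\in[0,t_1]:\widehat\psi(t)=\psi_0(t)\}$, closed and containing $0$ and $t_1$. If $C$ does not accumulate at $0$, then $\widehat\psi$ is affine on a neighborhood $(0,\delta)$ of $0$, so $\widehat\psi(t)=Lt$ there with $L<\infty$, and $\widehat\psi(t)/\phi(t)=L\,\big(t/\phi(t)\big)\to 0$. If $C$ accumulates at $0$, then for small $t$ set $b_t=\min\big(C\cap[t,t_1]\big)$, so $t\le b_t$ and $b_t\to 0$ as $t\to0^+$; if $t\in C$ then $\widehat\psi(t)/\phi(t)=\sqrt{\Psi(t)}$, and if $t\notin C$ then $t$ lies in an affine piece $[a,b_t]$ of $\widehat\psi$ whose endpoints belong to $C$, and, using $\psi_0=\phi\sqrt{\Psi}$, monotonicity of $\Psi$, and the fact that the concave function $\phi$ lies above its chords,
\[
\widehat\psi(t)=\tfrac{b_t-t}{b_t-a}\psi_0(a)+\tfrac{t-a}{b_t-a}\psi_0(b_t)\le\sqrt{\Psi(b_t)}\Bigl(\tfrac{b_t-t}{b_t-a}\phi(a)+\tfrac{t-a}{b_t-a}\phi(b_t)\Bigr)\le\sqrt{\Psi(b_t)}\,\phi(t).
\]
In every case $\widehat\psi(t)/\phi(t)\le\sqrt{\Psi(b_t)}\to 0$, proving (c). The main obstacle is exactly this concavity issue: $\psi_0=\phi\sqrt{\Psi}$ need not be concave (a computation shows concavity of $\psi_0$ near $0$ is equivalent to $4\phi\phi''\Psi^2+2\phi'\Psi\le 1$ there, which one can force from (3) in the shape $\phi(t)\le\phi(1)t^\alpha$ only when $\alpha>\tfrac12$). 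The concave-envelope device avoids this entirely; the one delicate point it creates, the survival of (c), is where the Dini condition reappears, through $\Psi(0^+)=0$ together with $\phi(t)/t\to\infty$.
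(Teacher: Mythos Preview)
Your proof is correct and follows a genuinely different route from the paper's. The paper discretizes: it sets $a_k=\phi^{-1}(2^{-k})$, converts condition~(2) into convergence of $\sum A_k$ with $A_k=2^{k+1}(a_k-a_{k+1})$, applies the ``square-root-of-tails'' trick to the \emph{series} to produce a new convergent series with terms $A_k''\gg A_k$, and then builds $\psi$ as a piecewise-linear function with $\psi(b_k)=2^{-k}$ for suitable $b_k$. Concavity comes for free (the slopes $1/A_k''$ are monotone by construction), but establishing (c) requires comparing $\phi$ at the non-adjacent points $b_{k+1}$ and $a_{k+1}$, and this is precisely where the paper invokes condition~(3) in the form $\phi(kt)\ge k^\alpha\phi(t)$.

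Your construction applies the same square-root trick directly to the integral, setting $\psi_0=\phi\sqrt{\Psi}$, and then repairs concavity with the least concave majorant. The key observation---that on each affine piece $[a,b_t]$ of the envelope one can factor out $\sqrt{\Psi(b_t)}$ and then use concavity of $\phi$ to dominate the remaining chord by $\phi(t)$---is clean and, notably, uses only (1) and (2). Condition~(3) never enters your argument (your closing remark invokes it only to explain why $\psi_0$ itself may fail to be concave, not to prove anything). So you have actually established the lemma under the weaker hypotheses (1)--(2) alone, which is a strict improvement over the paper's version. The small facts you rely on about the concave envelope---that it agrees with $\psi_0$ at the endpoints $0$ and $t_1$, that it is nondecreasing when $\psi_0$ is, and that on each complementary interval of the contact set it is the chord of $\psi_0$---are standard, though in a final write-up a sentence justifying $\widehat\psi(t_1)=\psi_0(t_1)$ and the monotonicity would not hurt.
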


The conditions (1), (2) and (3) are the same as in Theorem~\ref{thm:main} and conditions (a), (b) are the same as conditions (1), (2) in Proposition~\ref{prop:main}. However, (c) means that
$\psi(t)$ is much smaller than $\phi(t)$ when $t>0$ is sufficiently small.

Now, we construct a homeomorphism $\Phi$ as in Proposition~\ref{prop:main}, satisfying
$|\Phi(x)-\Phi(y)|+|\Phi^{-1}(x)-\Phi^{-1}(y)|\leq C\psi(|x-y|)$, and we use it in the definition of the sequence $\{ F_k\}$ constructed above. Property (c) of $\psi$
implies that
$|\Phi(x)-\Phi(y)|+|\Phi^{-1}(x)-\Phi^{-1}(y)|\leq \eps\phi(|x-y|)$, provided $x$ and $y$ are sufficiently close. That allows us to control the constant in the inequality \eqref{y40} so well that
we can take the constant $C_k$ on the right hand side of \eqref{y40} to be independent of $k$.
Passing to the limit shows that the limiting homeomorphism $F$ also satisfies \eqref{y40} with that constant.

\subsection{How to prove Proposition~\ref{prop:main}}
The set $A$ from the statement of Proposition~\ref{prop:main} is a Cantor set constructed in a standard way. Let $1=\alpha_0>\alpha_1>\alpha_2>\ldots$
be a decreasing sequence of positive numbers such that $2\alpha_{k+1}<\alpha_k$.
Let $\Q_0=Q$ be the initial cube of edge-length $\alpha_0=1$. Let $\Q_1$ be the union of $2^n$ cubes inside $Q$, each of edge-length $\alpha_1$. These $2^n$ cubes
are `evenly' distributed. Let $\Q_2$ be the union of $2^n\cdot2^n=2^{2n}$ cubes inside $\Q_1$, each of edge-length $\alpha_2$. Namely, inside each of the $2^n$ cubes in $\Q_1$ we have
$2^n$ smaller cubes of edge-length $\alpha_2$. Again, the cubes are `evenly' distributed, see Figure~\ref{fig:Phi2}. The condition $2\alpha_{k+1}<\alpha_k$ is necessary, as otherwise there would not be enough space for the cubes of the next generation. The Cantor set is then defined as $A=\bigcap_{k=0}^\infty \Q_k$. The volume of $\Q_k$ equals $2^{kn}\alpha_k^n=(2^k\alpha_k)^n$. Hence, in order for the Cantor set to have positive measure, we need
$\lim_{k\to\infty}2^k\alpha_k>0$.

The $2^n$ cubes $Q_j$, $j=1,2,\ldots,2^n$, of the first generation, that form the set $\Q_1$, are placed in two layers: $2^{n-1}$ cubes ($j=1,\ldots,2^{n-1}$) above the other $2^{n-1}$ cubes ($j=2^{n-1}+1,\ldots,2^n$). We choose indices in such a way that the cube $Q_{2^{n-1}+j}$ is right below the cube $Q_j$.

The homeomorphism $\Phi$ will be constructed as a limit of diffeomorphisms $\Phi_k$. The diffeomorphism $\Phi_1$ of $Q$ is identity near the boundary of $Q$
and it exchanges the cubes $Q_j$ from the top layer with the cubes $Q_{2^{n-1}+j}$ from the bottom layer. For each $j=1,2,\ldots,2^{n-1}$, the mapping $\Phi_1$ restricted to $Q_j$ is a translation of $Q_j$ onto
$Q_{2^{n-1}+j}$ and also $\Phi_1$ restricted to $Q_{2^{n-1}+j}$ is a translation of $Q_{2^{n-1}+j}$ onto
$Q_j$. This construction of $\Phi_1$ is carefully described in Lemma~\ref{est:lip}.

The diffeomorphism $\Phi_2$ coincides with $\Phi_1$ in $Q\setminus \Q_1$. Inside each cube $Q_j$ there is a family of $2^n$  cubes of the second generation. Now, the diffeomorphism $\Phi_2$
exchanges cubes from the top layer of this family with the cubes from the bottom layer. The diffeomorphism $\Phi_2$ does this in every cube $Q_j$, $j=1,2,\ldots,2^n$. The
diffeomorphisms $\Phi_k$ are defined in a similar way. The sequence $\{\Phi_k\}$ converges in the uniform metric to a homeomorphism $\Phi$.

The sequence of diffeomorphisms $\{\Phi_k\}$ reverses the vertical order of cubes used in the construction of the Cantor set $A$. Hence, the limiting homeomorphism $\Phi$ restricted to the Cantor set $A$
is the reflection in the hyperplane $x_n=1/2$. Thus $\Phi$ is approximately differentiable at the density points of $A$ and the approximate derivative satisfies \eqref{E4}. Also, it follows from the construction of the sequence
$\{\Phi_k\}$ that $\Phi$ is a diffeomorphism outside $A$, so it is differentiable there and the Lusin property of $\Phi$ easily follows.

We already pointed out that the numbers $\alpha_k$ used in this construction must satisfy $1=\alpha_0>\alpha_1>\ldots$,\, $2\alpha_{k+1}<\alpha_k$ and
$\lim_{k\to\infty}2^k\alpha_k>0$. That would be enough if we wanted to obtain a homeomorphism $\Phi$ with all properties listed in Proposition~\ref{prop:main} but the last one:
$|\Phi(x)-\Phi(y)|+|\Phi^{-1}(x)-\Phi^{-1}(y)|\leq C(n,\phi)\phi(|x-y|)$. This condition requires a much more careful choice of the sequence $\{\alpha_k\}$, since the numbers in the sequence must be related to
the function $\phi$. This is done in Lemma~\ref{ap:est ak}.

The numbers $\beta_k=(\alpha_{k-1}-2\alpha_k)/4$ are the distances of the cubes of the $k$-th generation to the boundaries of cubes of the ($k-1$)-th generation, see Figure~\ref{fig:Phi2}. Their properties are listed in
Lemma~\ref{ap:est bk}.

\subsection{Structure of the paper}
Section~\ref{aux} is of technical character: we recall the definition and basic properties of the modulus of continuity there and then we prove several lemmata needed in the proofs of Proposition~\ref{prop:main} and
Theorem~\ref{thm:main}. In Section~\ref{flipping} we prove Proposition~\ref{prop:main} and in Section~\ref{Pr1.2} we prove Theorem~\ref{thm:main}.

\section{Auxiliary lemmata}
\label{aux}

This section starts with a definition and basic properties of the modulus of continuity, but then  it is focused on
technical lemmata that will be used later in the proof of Proposition~\ref{prop:main} and Theorem~\ref{thm:main}. Some of these lemmata have already been mentioned in Introduction.
The reader may skip this section, go directly to Section~\ref{flipping} and come back to the results of Section~\ref{aux} whenever necessary.

We say that a continuous, non-decreasing and concave function $\phi:[0,\infty)\to [0,\infty)$ such that $\phi(0)=0$ is a modulus
of continuity of a function $f:X\to\bbbr$ defined on a metric space $(X,d)$ if
\begin{equation}
\label{e1}
|f(x)-f(y)|\leq\phi(d(x,y))
\quad
\text{for all $x,y\in X$.}
\end{equation}
It is well known that every continuous function on a compact metric space has a modulus of continuity.
The construction goes as follows. If $f$ is constant, then we take $\phi(t)\equiv 0$. Thus assume that $f$ is not constant.
Let
$\phi_1(t)=\sup\{|f(x)-f(y)|:\, d(x,y)\leq t\}$. Since the function $f$ is uniformly continuous and bounded,
the function $\phi_1:[0,\infty)\to [0,\infty)$ is non-decreasing and $\lim_{t\to 0^+}\phi_1(t)=0$.
Also, $\phi_1(t)$ is constant for $t\geq\diam X$, so \mbox{$M=\sup_{t>0}\phi_1(t)$} is positive and finite.
Clearly, $|f(x)-f(y)|\leq \phi_1(d(x,y))$ for all $x,y\in X$.
To obtain a modulus of continuity (as defined above), we define $\phi$ to be the least concave function greater than or equal to $\phi_1$.
Namely,
\begin{equation}
\label{e2}
\phi(t)=\inf\{\alpha t+\beta:\, \text{$\phi_1(s)\leq\alpha s+\beta$ for all $s\in [0,\infty)$}\}.
\end{equation}
It is easy to see that $\phi$ is concave and $\phi\geq\phi_1$, so \eqref{e1} is satisfied and $\phi$ is nonnegative.
Moreover, concavity implies continuity of $\phi$ on $(0,\infty)$, see
\cite[Theorem~A,~p.4]{RV}.
Since $\alpha\geq 0$ in \eqref{e2}, $\phi$ is non-decreasing. It remains to show that $\lim_{t\to 0^+}\phi(t)=0$.
For $0<\beta<M$ define $\alpha=\sup_{s>0}(\phi_1(s)-\beta)/s$. Note that $\alpha$ is positive and finite, because $\phi_1(s)-\beta<0$ for small $s$.
Clearly, $\phi_1(s)\leq\alpha s+\beta$ for all $s>0$.
Hence $\phi(t)\leq\alpha t+\beta$, so $0\leq\limsup_{t\to 0^+}\phi(t)\leq \beta$. Since $\beta>0$ can be arbitrarily small, we conclude that $\lim_{t\to 0^+}\phi(t)=0$.

The moduli $\phi(t)=Ct$ describe Lipschitz functions and more generally $\phi(t)=Ct^\alpha$, $\alpha\in (0,1]$, describe $\alpha$-H\"older continuous functions. In this paper we are interested in functions with modulus of continuity satisfying the following conditions:
\begin{itemize}
\item[(1)] $\phi:[0,\infty)\to [0,\infty)$ is increasing, continuous, concave, $\phi(0)=0$,\\
\item[(2)]
$\displaystyle \int_0^1 \frac{dt}{\phi(t)}<\infty$.
\end{itemize}
Here and in what follows, by an increasing function we mean a strictly increasing function. The Lipschitz modulus of continuity $\phi(t)=Ct$ does not satisfy the condition (2). However,
$\phi(t)=Ct^\alpha$, $\alpha\in (0,1)$, satisfies both conditions.
Also, there is a function $\phi$ that equals $\phi(t)=t\log^2 t$ near zero and has both properties (1) and (2).
A function with this modulus of continuity is
H\"older continuous with any exponent $\alpha<1$. Thus condition (2) means that $\phi$ is a sub-Lipschitz modulus of continuity and it can be pretty close to the Lipschitz one.

In order to avoid confusion we adopt the rule that $\phi^{-1}(t)$ will always stand for the inverse function and not for its reciprocal.

Although the following observation will not be used in the paper, one should note that (2) is
equivalent to $\phi^{-1}(t)/t$ satisfying the Dini condition. Indeed, according to Lemma~\ref{ap:estpsi}(b) we have $\lim_{t\to 0^+}t/\phi(t)=0$.
Since concave functions are locally Lipschitz, \cite[Theorem~A,~p.4]{RV}, integration by parts yields
\begin{equation*}
\begin{split}
\int_0^1\frac{dt}{\phi(t)}=\left.\frac{t}{\phi(t)}\right|_0^1+\int_0^1 \frac{t\phi'(t)dt}{\phi^2(t)}
=\frac{1}{\phi(1)}+\int_0^{\phi(1)}\frac{\phi^{-1}(s)/s\, ds}{s}\, .
\end{split}
\end{equation*}
The last equality follows from the substitution $s=\phi(t)$. Also, for the integration by parts to be rigorous,
we should integrate from $\eps$ to $1$ and then let $\eps\to 0^+$.

Now we will present some technical lemmata related to functions $\phi$ satisfying (1) and (2). They will be needed later in the proof of Proposition~\ref{prop:main}.
Since the remaining part of this section is of purely technical nature, the reader might want to skip it for now and return to it
when necessary.

In Lemmata~\ref{ap:estpsi}, \ref{ap:est ak}, \ref{ap:est bk} and~\ref{ap:est l} below we assume that $\phi:[0,\infty)\to [0,\infty)$
is a given function that satisfies conditions (1) and (2).

\begin{lemma}\leavevmode 
\label{ap:estpsi}
\begin{itemize}
\item[(a)] The function $\displaystyle t\mapsto \frac{t}{\phi(t)}$ is non-decreasing and
\item[(b)] $\displaystyle \lim_{t\to 0^+}\frac{t}{\phi(t)}=0.$
\end{itemize}
\end{lemma}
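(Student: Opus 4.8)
For part (a), the plan is to invoke the elementary fact that a concave function vanishing at the origin has a non-increasing difference quotient. Concretely, fix $0<s<t$ and write $s$ as the convex combination $s=\tfrac{s}{t}\,t+\bigl(1-\tfrac{s}{t}\bigr)\,0$. Concavity of $\phi$ together with $\phi(0)=0$ gives
\[
\phi(s)\ \ge\ \frac{s}{t}\,\phi(t)+\Bigl(1-\frac{s}{t}\Bigr)\phi(0)\ =\ \frac{s}{t}\,\phi(t),
\]
hence $\phi(s)/s\ge\phi(t)/t$. Since $\phi$ is increasing with $\phi(0)=0$, we have $\phi(\tau)>0$ for every $\tau>0$, so $t\mapsto t/\phi(t)$ is well defined on $(0,\infty)$, and the inequality above rearranges to $s/\phi(s)\le t/\phi(t)$. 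This is exactly the assertion that $t\mapsto t/\phi(t)$ is non-decreasing.

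For part (b), I would first note that (a) guarantees the limit exists: a non-decreasing, non-negative function on $(0,\infty)$ has a limit at $0^+$ equal to its infimum, so
\[
L\ :=\ \lim_{t\to 0^+}\frac{t}{\phi(t)}\ =\ \inf_{t>0}\frac{t}{\phi(t)}\ \ge\ 0 .
\]
The only thing left is to exclude $L>0$, and here condition (2) does the work. If $L>0$, then by monotonicity $t/\phi(t)\ge L$ for all $t\in(0,1)$, i.e.\ $1/\phi(t)\ge L/t$, whence
\[
\int_0^1\frac{dt}{\phi(t)}\ \ge\ L\int_0^1\frac{dt}{t}\ =\ +\infty,
\]
contradicting (2). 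Therefore $L=0$, which is the claim.

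I do not expect a genuine obstacle here: the whole content is the observation that concavity plus $\phi(0)=0$ forces the difference quotient $\phi(t)/t$ to be monotone (so that $t/\phi(t)$ is monotone and its limit at $0^+$ exists), and that the Dini-type integrability in (2) is precisely the hypothesis needed to rule out a strictly positive limit — a finite $L>0$ would make $1/\phi$ bound $L/t$ from above near $0$, which is not integrable. The argument is short and elementary; the only care needed is to record that $\phi(t)>0$ for $t>0$ (from $\phi$ increasing and $\phi(0)=0$) so that the quotient is legitimate.
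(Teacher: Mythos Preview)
Your proof is correct and follows essentially the same approach as the paper: both use the concavity inequality $\phi(s)\ge (s/t)\phi(t)$ for $0<s<t$ (with $\phi(0)=0$) to get monotonicity of $t/\phi(t)$, and both rule out a positive limit by observing it would force $1/\phi(t)\ge c/t$, contradicting the integrability condition~(2). The only difference is cosmetic (you swap the roles of $s$ and $t$ and add the remark that $\phi(t)>0$ for $t>0$).
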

\begin{proof}
By concavity of $\phi$, if $s>t>0$,
\begin{equation}
\label{conc psi}
\phi(t)=\phi\Big(\frac{t}{s}s+\Big(1-\frac{t}{s}\Big)0\Big)\geq \frac{t}{s}\phi(s)+\Big(1-\frac{t}{s}\Big)\phi(0)=\frac{t}{s}\phi(s),
\end{equation}
which proves (a).
Next, (a) implies that the limit in (b) exists; assume, to the contrary, that $\lim_{t\to 0^+} t/\phi(t)=c>0$. Then, for all $t>0$, $1/\phi(t)\geq c/t$ and the improper integral in (2) is divergent. This proves (b).
\end{proof}

\begin{lemma}
\label{ap:est ak}
Let $N>0$ be such that
\begin{equation}
\label{eq2017}
\frac{1}{2^{N}}\left(1+\int_0^{\phi^{-1}(2^{-N})} \frac{ds}{\phi(s)}\right)=1.
\end{equation}
For $k=0,1,2,\ldots$ set
$$
\alpha_k=\frac{1}{2^{N+k}}\left(1+\int_0^{\phi^{-1}(2^{-N-k})} \frac{ds}{\phi(s)}\right)\, .
$$
Then 
\begin{itemize}
\item[(a)] $\alpha_0=1$,
\item[(b)] $2\alpha_{k+1}<\alpha_k$,
\item[(c)] $\lim_{k\to\infty}2^k \alpha_k=2^{-N}>0$,
\item[(d)] for all $k$ we have $\alpha_{k}\leq 2^{-k}$ and there exists $K_{o}$ such that
\begin{equation}\label{est_an}
\frac{1}{2^{N+k}}<\alpha_k<\frac{1}{2^{N+k-1}}
\quad
\text{for all $k\geq K_o$.}
\end{equation}
\end{itemize}
\end{lemma}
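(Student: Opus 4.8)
The plan is to obtain (a)--(d) as essentially immediate consequences of the definition of the numbers $\alpha_k$, after first establishing that a suitable $N$ exists; that last point is the only one requiring a genuine argument.

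\smallskip
\noindent\textbf{Existence of $N$.} I would introduce the auxiliary function $G(u)=u\bigl(1+\int_0^{\phi^{-1}(u)}ds/\phi(s)\bigr)$, so that the left-hand side of \eqref{eq2017} is $G(2^{-N})$, and look for $u=2^{-N}\in(0,1)$ with $G(u)=1$. Since $\phi$ is continuous, strictly increasing and $\phi(0)=0$, its inverse $\phi^{-1}$ is defined and continuous near $0$ with $\phi^{-1}(0)=0$; hence by (2) the tail $\int_0^{\phi^{-1}(u)}ds/\phi(s)\to0$ as $u\to0^+$, so $G(u)\to0$ as $u\to0^+$. For the other end I split into cases. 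If $\lim_{t\to\infty}\phi(t)>1$, then $1$ lies in the domain of $\phi^{-1}$ and $G(1)=1+\int_0^{\phi^{-1}(1)}ds/\phi(s)>1$. If $\lim_{t\to\infty}\phi(t)\le1$, then as $u$ increases to $\lim_{t\to\infty}\phi(t)$ one has $\phi^{-1}(u)\to\infty$, and the crude estimate $\phi(s)\le u$ for $0<s\le\phi^{-1}(u)$ gives $G(u)\ge u\int_0^{\phi^{-1}(u)}ds/\phi(s)\ge\phi^{-1}(u)\to\infty$. In either case $G$ is continuous on its domain, tends to $0$ as $u\to0^+$, and eventually exceeds $1$, so the intermediate value theorem produces $u\in(0,1)$ with $G(u)=1$, and $N:=\log_2(1/u)>0$ solves \eqref{eq2017}. (The formula for $G$ shows it is strictly increasing, hence $N$ is unique, but I will not need this.) In particular $2^{-N-k}<2^{-N}<\lim_{t\to\infty}\phi(t)$ for all $k\ge0$, so every $\phi^{-1}(2^{-N-k})$ is well defined.

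\smallskip
\noindent\textbf{The four assertions.} Write $I_k=\int_0^{\phi^{-1}(2^{-N-k})}ds/\phi(s)$, so that $\alpha_k=2^{-N-k}(1+I_k)$. Then (a) is just \eqref{eq2017} rewritten. For (b), a direct computation gives $\alpha_k-2\alpha_{k+1}=2^{-N-k}(I_k-I_{k+1})=2^{-N-k}\int_{\phi^{-1}(2^{-N-k-1})}^{\phi^{-1}(2^{-N-k})}ds/\phi(s)$, which is strictly positive since $\phi^{-1}$ is strictly increasing and $1/\phi>0$ on $(0,\infty)$. For (c), $2^k\alpha_k=2^{-N}(1+I_k)$, and $I_k\to0$ because $\phi^{-1}(2^{-N-k})\to0$ and $c\mapsto\int_0^c ds/\phi(s)$ is continuous at $0$ (here (2) is used), so the limit equals $2^{-N}>0$. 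For (d): monotonicity of $\phi^{-1}$ gives $I_k\le I_0$ for all $k$, whence $\alpha_k=2^{-k}\,2^{-N}(1+I_k)\le2^{-k}\,2^{-N}(1+I_0)=2^{-k}$ by (a); the lower bound $\alpha_k>2^{-N-k}$ holds for all $k$ because $I_k>0$; and since $I_k\to0$ there is $K_o$ with $I_k<1$ for $k\ge K_o$, so $\alpha_k=2^{-N-k}(1+I_k)<2^{-N-k+1}$ for such $k$.

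\smallskip
\noindent\textbf{Where the difficulty lies.} The computations in the second paragraph rely only on Lemma~\ref{ap:estpsi} and on the convergence of $\int_0^1 ds/\phi(s)$, and are routine. The one step that calls for care is the existence of $N$ --- in particular the case $\lim_{t\to\infty}\phi(t)\le1$, where $\phi^{-1}$ has bounded domain and one must check that the integral term drives $G$ past the value $1$ near the right endpoint.
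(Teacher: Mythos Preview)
Your proposal is correct and follows the same approach as the paper: the paper's proof simply asserts that the existence of $N$ ``can be easily seen by taking limits of the expression on the left hand side of \eqref{eq2017} as $N\to\infty$ and as $N\to 0^+$'' and that (a)--(d) ``follow immediately from the definition of $\alpha_k$ and the fact that $\int_0^1 ds/\phi(s)<\infty$'', and your argument is exactly this with the details written out. Your treatment of the existence of $N$ is in fact more careful than the paper's, since you explicitly handle the possibility $\lim_{t\to\infty}\phi(t)\le 1$ (where $\phi^{-1}(2^{-N})$ may fail to be defined for small $N$), a point the paper passes over.
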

\begin{proof}
The existence of $N>0$ satisfying \eqref{eq2017} can be easily seen by taking limits of the
expression on the left hand side of \eqref{eq2017} as $N\to\infty$ and as $N\to 0^+$.
The properties (a)--(d) follow immediately from the definition of $\alpha_{k}$ and the fact that
$\int_{0}^{1}{ds}/{\phi(s)}<\infty$.
\end{proof}

\begin{lemma}
\label{ap:est bk}
For $k\geq 1$ define
$$
\beta_k=\frac{1}{4}\left(\alpha_{k-1}-2\alpha_k\right)=\frac{1}{2^{N+k+1}}\int_{\phi^{-1}(2^{-(N+k)})}^{\phi^{-1}(2^{-(N+k-1)})}\frac{ds}{\phi(s)}.
$$
Then
\begin{itemize}
\item[(a)] $\phi(2\beta_k)< 2^{-(N+k)+1}$,
\item[(b)] $2^{-(N+k)}\leq \phi(4\beta_k)$,
\item[(c)] the sequence $(\beta_{k})$ is decreasing and $\lim_{k\to\infty}\beta_{k}=0$.
\end{itemize}
\end{lemma}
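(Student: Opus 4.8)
The plan is to first rewrite $\beta_k$ in the integral form claimed in the statement, and then read off (a)--(c) from elementary estimates of $1/\phi$ on a single interval. Throughout I write $a_k:=\phi^{-1}(2^{-(N+k)})$, so that $\phi(a_k)=2^{-(N+k)}$, $\phi(a_{k-1})=2^{-(N+k)+1}$, $\phi(a_{k+1})=2^{-(N+k)-1}$, and $0<a_{k+1}<a_k<a_{k-1}$ because $\phi$, hence $\phi^{-1}$, is increasing with $\phi(0)=0$. Setting $I_j:=\int_0^{a_j}ds/\phi(s)$, Lemma~\ref{ap:est ak} gives $\alpha_j=2^{-(N+j)}(1+I_j)$, so that
\begin{equation*}
\alpha_{k-1}-2\alpha_k=2^{-(N+k-1)}\bigl((1+I_{k-1})-(1+I_k)\bigr)=2^{-(N+k-1)}\int_{a_k}^{a_{k-1}}\frac{ds}{\phi(s)},
\end{equation*}
and dividing by $4$ gives $\beta_k=2^{-(N+k+1)}\int_{a_k}^{a_{k-1}}ds/\phi(s)$, which is the asserted formula since $a_k=\phi^{-1}(2^{-(N+k)})$ and $a_{k-1}=\phi^{-1}(2^{-(N+k-1)})$.

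For (a): on $[a_k,a_{k-1}]$ we have $\phi(s)\geq\phi(a_k)=2^{-(N+k)}$, so $\int_{a_k}^{a_{k-1}}ds/\phi(s)\leq 2^{N+k}(a_{k-1}-a_k)$, whence $2\beta_k\leq a_{k-1}-a_k<a_{k-1}$ (strictly, as $a_k>0$); applying the increasing function $\phi$ yields $\phi(2\beta_k)<\phi(a_{k-1})=2^{-(N+k)+1}$. For (b): on the same interval $\phi(s)\leq\phi(a_{k-1})=2^{-(N+k-1)}$, so $\int_{a_k}^{a_{k-1}}ds/\phi(s)\geq 2^{N+k-1}(a_{k-1}-a_k)$, whence $4\beta_k\geq a_{k-1}-a_k$. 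By Lemma~\ref{ap:estpsi}(a), $a_{k-1}/\phi(a_{k-1})\geq a_k/\phi(a_k)$, and since $\phi(a_{k-1})=2\phi(a_k)$ this says $a_{k-1}\geq 2a_k$; hence $4\beta_k\geq a_{k-1}-a_k\geq a_k$ and therefore $\phi(4\beta_k)\geq\phi(a_k)=2^{-(N+k)}$.

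For (c), $\beta_k\to 0$ is immediate from (a): $\phi(2\beta_k)<2^{-(N+k)+1}\to 0$, and since $\phi$ is increasing and continuous with $\phi(0)=0$ this forces $2\beta_k\to 0$ (equivalently, $2\beta_k<\phi^{-1}(2^{-(N+k-1)})\to 0$ by continuity of $\phi^{-1}$). Monotonicity is the only step needing an idea. From the integral formula, $\beta_{k+1}\leq\beta_k$ is equivalent to $\int_{a_{k+1}}^{a_k}ds/\phi(s)\leq 2\int_{a_k}^{a_{k-1}}ds/\phi(s)$; estimating the left-hand integrand above by $1/\phi(a_{k+1})=2^{N+k+1}$ and the right-hand integrand below by $1/\phi(a_{k-1})=2^{N+k-1}$ reduces this to $2(a_k-a_{k+1})\leq a_{k-1}-a_k$, i.e. $3a_k\leq a_{k-1}+2a_{k+1}$. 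Since $\phi$ is concave and increasing, $\phi^{-1}$ is convex, and the convex combination $2^{-(N+k)}=\tfrac13\cdot 2^{-(N+k-1)}+\tfrac23\cdot 2^{-(N+k+1)}$ gives $a_k=\phi^{-1}\bigl(\tfrac13\cdot 2^{-(N+k-1)}+\tfrac23\cdot 2^{-(N+k+1)}\bigr)\leq\tfrac13 a_{k-1}+\tfrac23 a_{k+1}$, which is exactly the required inequality. The only genuine obstacle is spotting the weights $\tfrac13,\tfrac23$ that make these crude integral estimates close up; everything else is routine use of the monotonicity of $\phi$ on $[a_{k+1},a_{k-1}]$.
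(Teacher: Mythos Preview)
Your proof is correct. Parts (a), (b), and the limit in (c) are essentially the paper's argument: the paper also bounds $\int_{a_k}^{a_{k-1}}ds/\phi(s)$ above and below by $(a_{k-1}-a_k)/\phi(a_k)$ and $(a_{k-1}-a_k)/\phi(a_{k-1})$, and for (b) uses $a_{k-1}\geq 2a_k$ (phrased there as $\phi^{-1}(2t)\geq 2\phi^{-1}(t)$, convexity of $\phi^{-1}$, which is equivalent to your appeal to Lemma~\ref{ap:estpsi}(a)).

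The monotonicity argument in (c) is where you diverge. The paper rewrites its upper bound for $\beta_k$ as $2^{-(N+k+1)}$ times the secant slope $\dfrac{a_{k-1}-a_k}{2^{-(N+k-1)}-2^{-(N+k)}}$ of the convex function $\phi^{-1}$, replaces this by the larger secant slope over the adjacent interval, and compares with its lower bound for $\beta_{k-1}$. You instead collapse everything into the single convexity inequality $a_k\leq\tfrac13 a_{k-1}+\tfrac23 a_{k+1}$, having reduced the question to $3a_k\leq a_{k-1}+2a_{k+1}$ via the crude integral bounds. Both routes use the same two ingredients (the min/max bounds on $1/\phi$ and convexity of $\phi^{-1}$); yours is shorter once the weights $\tfrac13,\tfrac23$ are spotted, while the paper's secant-slope formulation is more systematic and would generalize more readily.

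One small point: as written, your chain yields only $\beta_{k+1}\leq\beta_k$, since the convexity step need not be strict. But $\phi$ is \emph{strictly} increasing, so the integral estimates $\int_{a_{k+1}}^{a_k}ds/\phi(s)<2^{N+k+1}(a_k-a_{k+1})$ and $\int_{a_k}^{a_{k-1}}ds/\phi(s)>2^{N+k-1}(a_{k-1}-a_k)$ are strict, and strict monotonicity of $(\beta_k)$ follows regardless. (For the later use in Lemma~\ref{ap:est l}, non-strict would in fact suffice.)
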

\begin{proof}
Since, by assumptions, the function  $1/\phi(s)$ is decreasing, we have
\begin{equation}
\label{est:bk_up}
\begin{split}
\beta_k&< \frac{1}{2^{N+k+1}}\left(\phi^{-1}(2^{-(N+k-1)})-
\phi^{-1}(2^{-(N+k)})\right)2^{N+k}\\
&\leq \frac{1}{2}\phi^{-1}(2^{-(N+k-1)}),
\end{split}
\end{equation}
which proves (a).
Similarly,
\begin{equation}\label{est:bk_dn1}
\begin{split}
\beta_k&> \frac{1}{2^{N+k+1}}\left(\phi^{-1}(2^{-(N+k-1)})-\phi^{-1}(2^{-(N+k)})\right)2^{N+k-1}\\
&=\frac{1}{4}\left(\phi^{-1}(2^{-(N+k-1)})-2\phi^{-1}(2^{-(N+k)})\right)+\frac{1}{4}\phi^{-1}(2^{-(N+k)}).
\end{split}
\end{equation}
The function $\phi^{-1}$ is convex, $\phi^{-1}(0)=0$, thus for any $t>0$ we have
\begin{equation}\label{dblng}
\phi^{-1}(2t)\geq 2\phi^{-1}(t).
\end{equation}
Therefore, the expression $\frac{1}{4}\left(\phi^{-1}(2^{-(N+k-1)})-2\phi^{-1}(2^{-(N+k)})\right)$ is non-negative and ultimately
\begin{equation}\label{est:bk_dn}
\beta_k\geq \frac{1}{4}\phi^{-1}(2^{-(N+k)}),
\end{equation}
which proves (b).
Now, we claim that the sequence $(\beta_k)$ is decreasing. Indeed, by \eqref{est:bk_up},
\begin{equation*}
\begin{split}
\beta_k&<\frac{1}{2}\left(\phi^{-1}(2^{-(N+k-1)})-\phi^{-1}(2^{-(N+k)})\right)\\
&=\frac{1}{2^{N+k+1}} \frac{\phi^{-1}(2^{-(N+k-1)})-\phi^{-1}(2^{-(N+k)})}{2^{-(N+k-1)}-2^{-(N+k)}}.
\end{split}
\end{equation*}
Convexity of $\phi^{-1}$ implies that  the difference quotient
$$
\frac{\phi^{-1}(x)-\phi^{-1}(y)}{x-y}
$$
is a non-decreasing function of
both $x$ and $y$. Therefore
\begin{equation*}
\begin{split}
\beta_k &<\frac{1}{2^{N+k+1}} \frac{\phi^{-1}(2^{-(N+k-2)})-\phi^{-1}(2^{-(N+k-1)})}{2^{-(N+k-2)}-2^{-(N+k-1)}}\\
&=\frac{1}{4}(\phi^{-1}(2^{-(N+k-2)})-\phi^{-1}(2^{-(N+k-1)}))<\beta_{k-1},
\end{split}
\end{equation*}
where the last inequality follows from \eqref{est:bk_dn1}.
Finally, by taking $k\to \infty$ in (a) we see that $\lim_{k\to \infty} \beta_{k}=0$, because $\phi$ is increasing,
continuous and $\phi(0)=0$.
\end{proof}

\begin{lemma}
\label{ap:est l}
For $k\geq 1$ define $\lambda_{k}=\alpha_{k-1}/\beta_{k}$. Then there exists a constant $C=C(\phi)$ such that for all $k\geq 2$,
\begin{equation}
\label{eq:est l}
\sup_{\ell<k}\lambda_{\ell}\leq C(\phi)\lambda_{k},
\end{equation}
\end{lemma}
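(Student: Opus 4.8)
The plan is to determine the size of each $\lambda_\ell=\alpha_{\ell-1}/\beta_\ell$ up to a multiplicative constant depending only on $\phi$, by comparing it with the quantity
\[
g(t):=\frac{t}{\phi^{-1}(t)}
\]
evaluated at the dyadic scale $t=2^{-(N+\ell)}$, and then to use that $g$ is non-increasing. The latter is the single structural input: since $\phi$ is concave with $\phi(0)=0$, its inverse $\phi^{-1}$ is convex with $\phi^{-1}(0)=0$, so $t\mapsto\phi^{-1}(t)/t$ is non-decreasing (this is precisely the mechanism behind inequality \eqref{dblng} used in Lemma~\ref{ap:est bk}), whence $g$ is non-increasing.

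First I would establish a dyadic \emph{upper} bound for $\lambda_\ell$, valid for all $\ell\ge1$: using $\alpha_{\ell-1}\le 2^{-(\ell-1)}$ from Lemma~\ref{ap:est ak}(d), and, by applying the increasing function $\phi^{-1}$ to the inequality $2^{-(N+\ell)}\le\phi(4\beta_\ell)$ of Lemma~\ref{ap:est bk}(b), the bound $\beta_\ell\ge\tfrac14\phi^{-1}(2^{-(N+\ell)})$, I obtain
\[
\lambda_\ell=\frac{\alpha_{\ell-1}}{\beta_\ell}\ \le\ \frac{2^{1-\ell}}{\tfrac14\,\phi^{-1}(2^{-(N+\ell)})}\ =\ 2^{N+3}\,g\!\left(2^{-(N+\ell)}\right),
\]
the last equality being the identity $2^{3-\ell}=2^{N+3}\cdot 2^{-(N+\ell)}$. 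Symmetrically I would establish a dyadic \emph{lower} bound for $\lambda_k$: the bracket in the defining formula for $\alpha_{k-1}$ is $\ge1$, so $\alpha_{k-1}\ge 2^{-(N+k-1)}$; and Lemma~\ref{ap:est bk}(a), i.e.\ $\phi(2\beta_k)<2^{-(N+k)+1}$, gives $2\beta_k<\phi^{-1}(2^{-(N+k-1)})$ after applying $\phi^{-1}$. Hence
\[
\lambda_k=\frac{\alpha_{k-1}}{\beta_k}\ >\ \frac{2^{-(N+k-1)}}{\tfrac12\,\phi^{-1}(2^{-(N+k-1)})}\ =\ 2\,g\!\left(2^{-(N+k-1)}\right).
\]

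Finally I would chain the two estimates. Fix $k\ge2$ and $\ell\in\{1,\dots,k-1\}$; then $2^{-(N+\ell)}\ge 2^{-(N+k-1)}$, so monotonicity of $g$ gives $g(2^{-(N+\ell)})\le g(2^{-(N+k-1)})$, and therefore
\[
\lambda_\ell\ \le\ 2^{N+3}\,g\!\left(2^{-(N+k-1)}\right)\ \le\ 2^{N+3}\cdot\tfrac12\,\lambda_k\ =\ 2^{N+2}\,\lambda_k.
\]
Taking the supremum over $\ell<k$ yields \eqref{eq:est l} with $C(\phi)=2^{N+2}$, a constant depending on $\phi$ only, since $N$ is determined by $\phi$ through \eqref{eq2017}.

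The argument is elementary, but the point to keep in mind is that the sequence $(\lambda_k)$ is \emph{not} bounded: from $\lambda_k\ge 2\,g(2^{-(N+k-1)})$ and the fact that $g(t)=t/\phi^{-1}(t)\to\infty$ as $t\to0^+$ (equivalently $\phi(u)/u\to\infty$ as $u\to0^+$, by Lemma~\ref{ap:estpsi}(b)), one has $\lambda_k\to\infty$. So one cannot simply dominate $\sup_{\ell<k}\lambda_\ell$ by a constant; the content of the lemma is that $\lambda_k$ grows at a controlled, essentially monotone dyadic rate, which is exactly what parts (a) and (b) of Lemma~\ref{ap:est bk} encode. Correctly matching the scales — note the half-step offset between $2^{-(N+\ell)}$ in the upper bound and $2^{-(N+k-1)}$ in the lower bound — is the only bookkeeping that requires attention.
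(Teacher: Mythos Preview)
Your proof is correct and follows essentially the same plan as the paper's: sandwich $\lambda_\ell$ between constant multiples of a monotone quantity at the dyadic scale, then use monotonicity. The paper chooses $\phi(2\beta_\ell)/(2\beta_\ell)$ as the comparison quantity (non-decreasing because $(\beta_\ell)$ is decreasing and $t\mapsto\phi(t)/t$ is non-increasing), while you use $g(2^{-(N+\ell)})$ with $g(t)=t/\phi^{-1}(t)$; these are the same object under the substitution $u=\phi^{-1}(t)$.

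Your execution is cleaner in one respect. The paper's upper bound on $\lambda_\ell$ relies on the two-sided estimate \eqref{est_an} for $\alpha_{\ell-1}$, which is available only for $\ell-1\ge K_o$; this forces a separate treatment of the finitely many $\lambda_1,\dots,\lambda_{K_o}$, absorbed into the constant $\Lambda=\sup_{\ell\le K_o}\lambda_\ell$. You avoid this by taking the cruder universal bound $\alpha_{\ell-1}\le 2^{-(\ell-1)}$ from Lemma~\ref{ap:est ak}(d) for the upper estimate, and the equally universal $\alpha_{k-1}\ge 2^{-(N+k-1)}$ (read off directly from the defining formula) for the lower estimate. The result is a single uniform argument valid for all $k\ge2$ with the explicit constant $C(\phi)=2^{N+2}$, rather than the paper's $\Lambda+8$. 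Nothing is lost: both constants depend on $\phi$ only through $N$.
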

\begin{proof}
We start by proving \eqref{eq:est l} for all $k>K_{o}$,
where $K_{o}$ is given as in (d), Lemma \ref{ap:est ak}.

We have then
\begin{equation}
\lambda_{k}= \frac{\alpha_{k-1}}{\beta_{k}}\geq  \frac{2^{-N-k+1}}{\beta_{k}}
\geq  \frac{\phi(2\beta_{k})}{\beta_{k}}=2\frac{\phi(2\beta_{k})}{2\beta_{k}}
\end{equation}
and
\begin{equation}
\lambda_{k}\leq  \frac{2^{-N-k+2}}{\beta_{k}}\leq  4\frac{\phi(4\beta_{k})}{\beta_{k}}
= 16 \frac{\phi(4\beta_{k})}{4\beta_{k}}\leq   16\frac{\phi(2\beta_{k})}{2\beta_{k}},
\end{equation}
where the last inequality follows from concavity of $\phi$, see Lemma~\ref{ap:estpsi}(a).

Since both the sequence $(\beta_{\ell})$ and the function $\phi(t)/t$ are non-increasing,
the sequence $\phi(2\beta_\ell)/(2\beta_\ell)$ is non-decreasing so
\begin{equation}
\begin{split}
\sup_{\ell<k}\lambda_{\ell} &\leq \underbrace{\sup_{\ell\leq K_{o}} \lambda_{\ell}}_{=\Lambda}+\sup_{K_{o}<\ell<k } \lambda_{\ell}\\
&\leq \Lambda+16\sup_{K_{o}<\ell<k}\frac{\phi(2\beta_{\ell})}{2\beta_{\ell}}\leq \Lambda+16\frac{\phi(2\beta_{k})}{2\beta_{k}}\\
&\leq \Lambda+8\lambda_{k}\leq (\Lambda+8)\lambda_{k}=C(\phi)\lambda_{k},
\end{split}
\end{equation}
where the inequalities in the last line are justified by the fact that $\lambda_{k}>1$ for all $k$.
Obviously, the same (possibly with different $C(\phi)$) holds for $k\leq K_{o}$, since $\{\lambda_{1},\ldots,\lambda_{K_{o}}\}$ is a finite set of positive numbers.
\end{proof}

\begin{lemma}
\label{better}
Assume that
\begin{itemize}
\item[(a)] $\phi:[0,\infty)\to [0,\infty)$ is increasing, continuous, concave, $\phi(0)=0$,\\
\item[(b)] $\displaystyle \int_0^1 \frac{dt}{\phi(t)}<\infty$ and\\
\item[(c)] $t \mapsto t^{-\alpha} \phi(t)$ is increasing on $(0,1)$ for some $\alpha\in(0,1)$.
\end{itemize}
Then there exists $\psi$ such that
\begin{itemize}
\item[(1)] $\psi:[0,\infty)\to [0,\infty)$ is increasing, continuous, concave, $\psi(0)=0$,\\
\item[(2)]
$\displaystyle \int_0^1 \frac{dt}{\psi(t)}<\infty$ and\\
\item[(3)] $\displaystyle \lim_{t\to 0^+}\frac{\psi(t)}{\phi(t)}=0$.
\end{itemize}
\end{lemma}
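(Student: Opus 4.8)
\emph{Approach.} The plan is to start from the natural candidate $\psi_{0}(t):=\phi(t)\sqrt{G(t)}$, where $G(t):=\int_{0}^{t}ds/\phi(s)$, and then correct its only defect by passing to a concave majorant. By (b), $G$ is finite, continuous, strictly increasing and concave on $[0,1]$ with $G(0)=0$; hence $\psi_{0}$ is continuous and increasing on $[0,1]$ with $\psi_{0}(0)=0$, and two of the three required properties come for free: the substitution $s=G(t)$ gives $\int_{0}^{1}\frac{dt}{\psi_{0}(t)}=\int_{0}^{1}\frac{G'(t)}{\sqrt{G(t)}}\,dt=2\sqrt{G(1)}<\infty$, and $\psi_{0}(t)/\phi(t)=\sqrt{G(t)}\to 0$ as $t\to 0^{+}$. (Only (a) and (b) are used so far.) The one thing that fails is that $\psi_{0}$ need not be concave — it is genuinely convex on intervals where $\phi$ is affine and $G(t)>\tfrac12$ — so I would let $\psi_{1}$ be the least concave majorant of $\psi_{0}$ on $[0,1]$, defined as in \eqref{e2}, and set $\psi(t):=\psi_{1}(\min(t,1))+t$. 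Then $\psi\colon[0,\infty)\to[0,\infty)$ is concave (a concave nondecreasing function of the concave function $\min(t,1)$, plus a linear term), strictly increasing, continuous, $\psi(0)=0$, and $\int_{0}^{1}dt/\psi\le\int_{0}^{1}dt/\psi_{1}\le\int_{0}^{1}dt/\psi_{0}<\infty$. So (1) and (2) hold, and since $t/\phi(t)\to 0$ by Lemma~\ref{ap:estpsi}(b), it only remains to prove $\psi_{1}(t)/\phi(t)\to 0$, which is (3).

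\emph{The key estimate, and the main obstacle.} Everything reduces to showing that passing to the concave majorant does not spoil the decay. The step I expect to be hardest is the bound
$$
\psi_{1}(t)\ \le\ \psi_{0}(t)+t\sup_{b\in[t,1]}\frac{\psi_{0}(b)}{b}\qquad(0<t\le 1).
$$
Its proof is short: for $0\le a\le t\le b\le 1$ the chord value $\tfrac{b-t}{b-a}\psi_{0}(a)+\tfrac{t-a}{b-a}\psi_{0}(b)$ is at most $\psi_{0}(a)+\tfrac{t}{b}\psi_{0}(b)\le\psi_{0}(t)+t\,\psi_{0}(b)/b$, using that $\psi_{0}$ is nondecreasing and that $\tfrac{t-a}{b-a}\le\tfrac{t}{b}$ (this inequality is equivalent to $b\ge t$); taking the supremum over such chords gives the bound, and in particular $\psi_{1}(t)\to 0$, so $\psi_{1}(0)=0$.

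\emph{Finishing.} Dividing the displayed bound by $\phi(t)$ gives $\psi_{1}(t)/\phi(t)\le\sqrt{G(t)}+\tfrac{t}{\phi(t)}\sup_{b\in[t,1]}\psi_{0}(b)/b$, and the first term tends to $0$. For the second term, fix $\eta>0$, pick $\rho\in(0,1)$ with $\sqrt{G(\rho)}<\eta$, and for $0<t<\rho$ split the supremum at $\rho$: if $b\in[t,\rho]$ then, since $s\mapsto\phi(s)/s$ is nonincreasing, $\tfrac{t}{\phi(t)}\cdot\tfrac{\psi_{0}(b)}{b}=\tfrac{t}{\phi(t)}\cdot\tfrac{\phi(b)}{b}\sqrt{G(b)}\le\sqrt{G(b)}\le\sqrt{G(\rho)}<\eta$; if $b\in[\rho,1]$ then $\tfrac{t}{\phi(t)}\cdot\tfrac{\psi_{0}(b)}{b}\le\tfrac{t}{\phi(t)}\,M_{\rho}$ with $M_{\rho}:=\max_{s\in[\rho,1]}\psi_{0}(s)/s<\infty$, which is $<\eta$ once $t$ is small enough, again by Lemma~\ref{ap:estpsi}(b). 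Hence $\limsup_{t\to 0^{+}}\psi_{1}(t)/\phi(t)\le\eta$ for every $\eta>0$, so $\psi_{1}(t)/\phi(t)\to 0$, completing the proof. The crux is precisely this last step: bounding the concave majorant of $\phi\sqrt{G}$ from above in terms of $\phi$ itself, for which the chord estimate together with $t/\phi(t)\to 0$ is exactly what is needed; condition (c) is not required for this argument.
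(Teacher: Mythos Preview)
Your argument is correct, and it is genuinely different from the paper's. The paper discretises: it sets $a_k=\phi^{-1}(2^{-k})$, converts the integral condition (b) into convergence of the series $\sum A_k$ with $A_k=2^{k+1}(a_k-a_{k+1})$, applies the square-root-of-tail trick $A_k'=\sqrt{\sum_{\ell\ge k}A_\ell}-\sqrt{\sum_{\ell\ge k+1}A_\ell}$ to get $A_k'/A_k\to\infty$ while $\sum A_k'<\infty$, then forces monotonicity by passing to $A_k''=\min_{\ell\le k}A_\ell'$ and builds $\psi$ piecewise linearly with $\psi(b_k)=2^{-k}$. At the very last step the paper invokes hypothesis (c) (the $t^{-\alpha}\phi(t)$ monotonicity) to convert $b_k/a_k\to\infty$ into $\psi(t)/\phi(t)\to 0$. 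Your construction $\psi_0=\phi\sqrt{G}$ is the continuous analogue of the same square-root trick, but by staying on the continuous side and handling the concave majorant via the elementary chord bound $\psi_1(t)\le\psi_0(t)+t\sup_{b\ge t}\psi_0(b)/b$, you get (3) from just (a) and (b), via $t/\phi(t)\to 0$ and the monotonicity of $\phi(t)/t$. So your proof is both shorter and strictly stronger: it shows that hypothesis (c) is superfluous for this lemma.

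One small point worth a sentence in a write-up: when you say $\psi(t)=\psi_1(\min(t,1))+t$ is concave because $\psi_1$ is ``a concave nondecreasing function'', the nondecreasing part is used but not argued. It follows immediately from the affine-infimum description \eqref{e2}: any supporting line $\alpha s+\beta\ge\psi_0$ with $\alpha<0$ is dominated at every $t<1$ by the constant line $\psi_0(1)$, so only lines with $\alpha\ge 0$ contribute to the infimum, and an infimum of nondecreasing affine functions is nondecreasing.
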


\begin{proof}
Set $a_k=\phi^{-1}(2^{-k})$.

By an application of the Maclaurin-Cauchy integral test, we have (cf. Figure~\ref{fig1}, left)
$$
\sum_{k=1}^\infty 2^k (a_k-a_{k+1})\leq  \int_0^{a_1} \frac{dt}{\phi(t)}\leq \sum_{k=1}^\infty 2^{k+1}(a_k-a_{k+1}).
$$
Thus the convergence of the improper integral $\int_0^{a_1} \frac{dt}{\phi(t)}$ is equivalent to the convergence of the series $\sum _{k=1}^\infty 2^{k+1}(a_k-a_{k+1})$.

Denoting $A_k=2^{k+1} (a_k-a_{k+1})$ we have, by assumption (b), that $\sum_{k=1}^\infty A_k<\infty$.

One easily checks that $(A_k)^{-1}$ is equal to the slope of the secant of graph of the function $\phi$ through $(a_{k+1},2^{-(k+1)})$ and $(a_k,2^{-k})$ (cf. Figure~\ref{fig1}, right). Thus, by concavity of $\phi$, the sequence $(A_k)$ is non-increasing.

\begin{figure}
\begin{tikzpicture}[>=latex']

\filldraw[color=lightgray!30!white, fill=lightgray!30!white] (0.5,0) -- (0.5,1.41) -- (2,1.41) -- (2,0);
\filldraw[color=lightgray!60!white, fill=lightgray!60!white] (0.5,0) -- (0.5,0.707) -- (2,0.707) -- (2,0);
\draw[domain=0.08:4,samples=100,smooth,variable=\x] plot ({\x},{1/sqrt{\x}});
\draw[densely dotted] (0,1.41) -- (2,1.41);
\draw[densely dotted] (0,0.707) -- (2,0.707);
\draw[densely dotted] (0.5,1.41) -- (0.5,0);
\draw[densely dotted] (2,1.41) -- (2,0);
\node[left] at (0,0.707) {$2^k$};
\node[left] at (0,1.41) {$2^{k+1}$};
\node[below] at (0.5,0) {$a_{k+1}$};
\node[below] at (2,0) {$a_{k}$};
\node[above] at (3.5,0.6) {$\frac{1}{\phi(t)}$};

\draw[->] (-1,0) -- (4.2,0) node[right] {$t$};
\draw[->] (0,-1) -- (0,4.2) node[above] {$y$};
\begin{scope}[shift={(7,0)}]
\draw[domain=0.001:4,samples=100,smooth,variable=\x] plot ({\x},{pow(1024*\x,0.4)/8});

\draw[densely dotted] (0.53,0) -- (0.53,1.55);
\draw[densely dotted] (0,1.55) -- (0.53,1.55);
\draw[densely dotted] (3,0) -- (3,3.1);
\draw[densely dotted] (0,3.1) -- (3,3.1);
\draw[color=darkgray] (-0.5,0.904)-- (4,3.728);
\node[below] at (0.53,0) {$a_{k+1}$};
\node[below] at (3,0) {$a_{k}$};
\node[below] at (3.5,3.3) {$\phi(t)$};
\node[left] at (0,1.55) {$2^{-(k+1)}$};
\node[left] at (0,3.1) {$2^{-k}$};
\draw[->] (-1,0) -- (4.2,0) node[right] {$t$};
\draw[->] (0,-1) -- (0,4.2) node[above] {$y$};
\draw [decorate,decoration={brace,amplitude=4pt,mirror},xshift=2pt,yshift=0pt]
(0,1.55) -- (0,3.1) node [black,midway,xshift=0.5cm] {$\frac{1}{2^{k+1}}$};
\draw [decorate,decoration={brace,amplitude=4pt},xshift=0pt,yshift=2pt]
(0.53,0) -- (3,0) node [black,midway,yshift=0.3cm] {$a_k-a_{k+1}$};
\end{scope}

\end{tikzpicture}
\caption{} \label{fig1}
\end{figure}

Now, let
$$
A_k'=\sqrt{\sum_{\ell=k}^\infty A_\ell}-\sqrt{\sum_{\ell=k+1}^\infty A_\ell}=A_k\left(\displaystyle\sqrt{\sum_{\ell=k}^\infty A_\ell}+\sqrt{\sum_{\ell=k+1}^\infty A_\ell}\right)^{-1}
$$
Then
\begin{equation}
\label{2016}
\lim_{k\to\infty}\frac{A_k'}{A_k}=\lim_{k\to\infty}\left(\displaystyle\sqrt{\sum_{\ell=k}^\infty A_\ell}+\sqrt{\sum_{\ell=k+1}^\infty A_\ell}\right)^{-1}=\infty,
\end{equation}
by convergence of the series $\sum A_\ell$.

Also,
$$
\sum_{k=1}^\infty A_k'=\sqrt{\sum_{k=1}^\infty A_k}<\infty.
$$
However, the sequence $(A_k')$ need not be non-increasing.

To correct that, let us set $A_k''=\min_{\ell\leq k} A_\ell'$. The sequence $(A_k'')$ is, obviously, non-increasing,
moreover $A_k''\leq A_k'$, thus the series $\sum A_k''$ is convergent.

Next, we prove that $\lim_{k\to\infty} A_k''/A_k=\infty$.
Assume $\ell(k)\leq k$ is such that $A''_k=A'_{\ell(k)}$. Since $A_k'>0$ and $A'_k\to 0$, we have
$$
\forall_{m\in\bbbn}\ \exists_{k_o}\ \forall_{k>k_o}\qquad A_k'<A'_{\ell(m)}=\min\{A_1',\ldots,A_m'\}.
$$
Hence for $k>k_o$
$$
A_{\ell(k)}'=\min\{A_1',\ldots,A_k'\}\leq A_k'<A_{\ell(m)}'=\min\{A_1',\ldots,A_m'\}
$$
so $\ell(k)\not\in\{1,\ldots,m\}$ and thus $\ell(k)>m$. This shows that $\ell(k)\to\infty$ as $k\to\infty$.
Now \eqref{2016} yields
$$
\frac{A_k''}{A_k}=\frac{A_{\ell(k)}'}{A_k}\geq \frac{A_{\ell(k)}'}{A_{\ell(k)}}\xrightarrow{k\to\infty}\infty.
$$

Set $b_k=\displaystyle \sum_{\ell=k}^\infty \frac{A_\ell''}{2^{\ell+1}}$.

The sequence $(b_k)$ is decreasing and $\lim_{k\to\infty}b_k=0$. Moreover, we have $A_k''=2^{k+1}(b_k-b_{k+1})$.

Define $\psi$ as a piecewise-linear function, affine on all intervals $[b_{k+1},b_k]$ and such that
$\psi(b_k)=2^{-k}$.

For $t\in (b_{k+1},b_k)$, $\psi'(t)=1/A_k''$, thus $\psi'$ is non-increasing and positive, which implies that $\psi$ is concave and increasing.

The same argument through the Maclaurin-Cauchy integral test that, at the beginning of the proof, gave us the equivalence between the convergence of the series
$\sum_{k=1}^\infty A_k$ and the condition (b) for $\phi$, allows us to conclude the condition (2) from the convergence of the series $\sum_{k=1}^\infty A_k''$.
Also,
$$
\psi(0)=\lim_{t\to 0^+}\psi(t)=\lim_{k\to\infty}\psi(b_k)=\lim_{k\to\infty}2^{-k}=0.
$$

We still need to prove (3), i.e. that $\displaystyle \lim_{t\to 0^+} \frac{\psi(t)}{\phi(t)}=0$.

By the Stolz-Ces\`aro Theorem, \cite[Theorem~2.7.1]{CN},
$$
\lim_{k\to\infty}\frac{b_k}{a_k}=\lim_{k\to\infty}\frac{b_{k+1}-b_k}{a_{k+1}-a_k}=\lim_{k\to\infty}\frac{A_k''}{A_k}=\infty.
$$
Note that the condition (c) implies that whenever $k>1$ and $kt<1$, we have $\phi(kt)>k^\alpha \phi(t)$. Thus, for $t\in (b_{k+1},b_k]$,
where $k$ is sufficiently large
$$
0\leq \frac{\psi(t)}{\phi(t)}\leq
\frac{\psi(b_k)}{\phi(b_{k+1})}\leq
\frac{2^{-k}}{(b_{k+1}/a_{k+1})^\alpha \phi(a_{k+1})}=
2\left(\frac{a_{k+1}}{b_{k+1}}\right)^\alpha\xrightarrow{k\to\infty}0.
$$
Therefore $\displaystyle \lim_{t\to 0^+} \frac{\psi(t)}{\phi(t)}=0$.
\end{proof}

For a diffeomorphism $\Phi:\Omega\to\bbbr^{n}$ of class $C^{\infty}$, $\Omega\subset \bbbr^{n}$, we define
$$
\|D\Phi\|_{\Omega}=\sup_{x\in\Omega}\|D\Phi(x)\| =\sup_{x\in\Omega} \sup_{|\xi|=1}\left|D\Phi(x)\xi\right|,
$$
$$
\|(D\Phi)^{-1}\|_{\Omega}=\sup_{x\in\Omega}\|(D\Phi)^{-1}(x)\|=\sup_{x\in\Omega}\sup_{|\xi|=1}\left| (D\Phi(x))^{-1}\xi\right|.\\
$$

In order to obtain estimates on the modulus of continuity of the homeomorphism $\Phi$ constructed in Proposition~\ref{prop:main},
we need Lipschitz estimates for the building block of the construction: a `box-exchange' diffeomorphism that switches  top and bottom layers of
dyadic cubes of size $0<\alpha<\frac{1}{2}$ within the unit cube.

\begin{lemma}
\label{est:lip}
Assume that $Q_1,\ldots,Q_{2^n}$ are the closed $n$-dimensional cubes of edge-length $0<\alpha<\frac{1}{2}$
inside the unit cube $Q=[0,1]^n$,
with dyadic (i.e. with all coordinates equal $1/4$ or $3/4$) centers. Let $\beta=(1-2\alpha)/4$. Then there exists a smooth diffeomorphism $F_{\alpha}:Q\to Q$  such that
\begin{itemize}
\item[(a)] $F_{\alpha}$ exchanges the `top layer' cubes $Q_{j}$ with `bottom layer' cubes $Q_{2^{n-1}+j}$ in such a
way that the restriction of $F_{\alpha}$ to a $\beta/10$-tubular neighborhood of each of $Q_j$ is a translation;
\item[(b)] $F_{\alpha}$ is identity near $\partial Q$,
\item[(c)] $\|DF_{\alpha}\|_Q+\|D(F_{\alpha}^{-1})\|_Q<C(n)/\beta$ for some constant $C(n)$ dependent only on $n$.
\end{itemize}
\end{lemma}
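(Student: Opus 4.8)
We may assume throughout that $n\ge 2$; for $n=1$ no orientation–preserving diffeomorphism of $[0,1]$ fixing the endpoints can exchange two subintervals. The plan is to realise $F_\alpha$ as a conjugate $F_\alpha=H^{-1}\circ G\circ H$, where $H$ is a smooth self-diffeomorphism of $Q$ that shrinks each $Q_j$ to a tiny concentric cube of a fixed size, and $G$ is a smooth self-diffeomorphism performing the required top/bottom exchange on those tiny cubes. The reason for conjugating is an asymmetry in the cost of $H$: to shrink the cubes, $H$ must stretch the thin channels between them (whose widths are comparable to $\beta$) into fat regions, so one should expect $\|DH\|_Q\sim\beta^{-1}$; but $H^{-1}$ only \emph{compresses} those channels and blows each tiny cube back up by a bounded factor, so $\|D(H^{-1})\|_Q\le C(n)$, and $G$ will be carried out in a region whose room is bounded below by a universal constant, so $\|DG\|_Q+\|D(G^{-1})\|_Q\le C(n)$ as well. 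Since the operator norm of a product of linear maps is at most the product of the norms, this gives $\|DF_\alpha\|_Q\le\|D(H^{-1})\|_Q\,\|DG\|_Q\,\|DH\|_Q\le C(n)/\beta$ and the same bound for $F_\alpha^{-1}=H^{-1}\circ G^{-1}\circ H$, which is (c). Note that the exchange cannot be done directly by pushing the $Q_j$ around: when $\alpha$ is close to $1/2$ the cubes almost fill $Q$ and there is no room to move anything, and conjugating by $H$ is precisely what buys the room.

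For the shrinking map, fix $\delta=\alpha/100$ and let $\widetilde Q_j$ be the cube of edge-length $\delta$ concentric with $Q_j$; then the $\widetilde Q_j$ are centred at the dyadic points and are pairwise $\ell^\infty$-distant at least $1/2$. I would construct $H\colon Q\to Q$ so that it is the identity near $\partial Q$, equals the homothety $x\mapsto c_j+(\delta/\alpha)(x-c_j)$ on the whole $\beta/10$-tubular neighbourhood of each $Q_j$ (here $c_j$ is the centre of $Q_j$; these neighbourhoods are pairwise disjoint and disjoint from $\partial Q$ because the $Q_j$ are $2\beta$ apart and $\beta$ from $\partial Q$), and maps $Q\setminus\bigcup_j Q_j$ diffeomorphically onto $Q\setminus\bigcup_j\widetilde Q_j$ while never contracting it. Such an $H$ exists, both complementary regions being a cube with $2^n$ concentric cubical holes, hence diffeomorphic rel boundary; and a careful piecewise-affine-then-mollified choice should give $\|DH\|_Q\le C(n)/\beta$, the largest singular value of $DH$ occurring in the channels just outside a face, edge or corner of some $Q_j$, where a layer of thickness $\sim\beta$ is expanded to thickness $\le 1/2$. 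With all singular values of $DH$ arranged to be $\ge 1$ on the channels, those of $DH^{-1}$ are $\le 1$ there, while $DH^{-1}=(\alpha/\delta)I=100\,I$ on the tiny cubes, so $\|D(H^{-1})\|_Q\le C(n)$.

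For the exchange, note that for each $v\in\{1/4,3/4\}^{n-1}$ the tiny cubes centred at $(v,3/4)$ and $(v,1/4)$ are at distance $1/2$, and every other $\widetilde Q_j$ is at distance $\ge 1/2$ from both, so there is an abundance of free space. I would define $G$ one pair at a time: for each $v$, inside the region $R_v$ obtained by thickening by $1/10$ the broken segment from $(v,3/4)$ through the centre of $Q$ to $(v,1/4)$, one first slides the lower tiny cube slightly off that segment, then slides the upper one down into the lower slot, then slides the lower one up into the upper slot; each step is a pipe isotopy through a pipe of width $\ge 1/10$ in which a cube of size $\delta\le 1/200$ moves a distance $\le 2\sqrt n$, hence a diffeomorphism supported in $R_v$, of norm $\le C(n)$, equal to a translation near each of its two ends. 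Since $R_v$ stays at distance $\ge 1/4-\delta$ from $\partial Q$ and meets no $\widetilde Q_j$ except the two it is meant to exchange (whether in their original or in their exchanged positions), the $2^{n-1}$ maps $G_v$ compose to a smooth self-diffeomorphism $G$ of $Q$, the identity near $\partial Q$, with $\|DG\|_Q+\|D(G^{-1})\|_Q\le C(n)$, which is a translation near each $\widetilde Q_j$ and exchanges $\widetilde Q_j$ with $\widetilde Q_{2^{n-1}+j}$.

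Finally $F_\alpha=H^{-1}\circ G\circ H$ is a smooth self-diffeomorphism of $Q$, equal to the identity near $\partial Q$, and on the $\beta/10$-tubular neighbourhood of $Q_j$ it is the composition of a homothety with linear part $(\delta/\alpha)I$, a translation, and a homothety with linear part $(\alpha/\delta)I$, hence an affine map with linear part $I$, that is, a translation, and it carries $Q_j$ onto $Q_{2^{n-1}+j}$; this gives (a) and (b), while (c) was obtained above. The delicate step, and the only one I expect to require genuine work, is the shrinking map $H$: one must expand every channel by a factor at most $C(n)/\beta$ even near the edges and corners of the cubes, where several $\sim\beta$-thin channels meet, while keeping the expansion non-contracting so that $H^{-1}$ costs only a constant. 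The construction of $G$ and the final verification are routine once $H$ is available.
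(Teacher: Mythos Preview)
Your approach is essentially the same as the paper's: both construct $F_\alpha$ as a conjugate $H^{-1}\circ G\circ H$, where $H$ shrinks the cubes $Q_j$ to concentric cubes of a smaller, fixed-manageable size and $G$ performs the exchange on the shrunken cubes, and both exploit the key asymmetry that $\|DH\|\le C(n)/\beta$ while $\|DH^{-1}\|\le C(n)$. The paper differs only in implementation details: it first reduces to $\alpha\ge 1/4$ (setting $F_\alpha=F_{1/4}$ for smaller $\alpha$), shrinks to the fixed target edge-length $1/4$ rather than your $\alpha/100$, and --- in place of your global piecewise-affine-then-mollified $H$ --- gives an explicit radial construction of the shrinking map applied separately inside each of the $2^n$ dyadic half-subcubes of $Q$, which makes the derivative bounds on $H$ and $H^{-1}$ immediate and sidesteps the edge/corner difficulties you flag as the delicate point.
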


\begin{proof}
Suppose that we already constructed $F_\alpha$ for $\alpha\in[\frac{1}{4},\frac{1}{2})$. If $0<\alpha<\frac{1}{4}$,
we set $F_\alpha=F_{\frac{1}{4}}$.

Since the dyadic cubes $Q_j$ of edge-length $\alpha$ are contained in the dyadic cubes of edge-length $1/4$, the mapping
$F_\alpha=F_{\frac{1}{4}}$ will exchange the cubes $Q_j$, so it will have the properties (a) and (b). The estimate (c)
follows from the corresponding estimate for $F_{\frac{1}{4}}$:
$$
\Vert DF_\alpha\Vert_Q +\Vert D(F_\alpha^{-1})\Vert_Q =
\Vert DF_{\frac{1}{4}}\Vert_Q +\Vert D(F_{\frac{1}{4}}^{-1})\Vert_Q <
8C(n)<\frac{2C(n)}{\beta},
$$
since $\beta<\frac{1}{4}$ when $0<\alpha<\frac{1}{4}$.

Thus it remains to show the construction of the mapping $F_\alpha$ when $\frac{1}{4}\leq\alpha<\frac{1}{2}$.

First, we sketch the construction of a smooth diffeomorphism $G_{\gamma}:[-1,1]^{n}\to [-1,1]^{n}$, $\gamma\in (0,\frac{1}{2}]$, such that
\begin{itemize}
\item[(A)] $G_{\gamma}$ is identity near $\partial [-1,1]^{n}$,
\item[(B)] $G_{\gamma}$ maps the cube $[-(1-\gamma),1-\gamma]^{n}$ to $[-\frac{1}{2},\frac{1}{2}]^{n}$
\item[(C)] $G_{\gamma}$ acts on  $[-(1-0.9\gamma),1-0.9\gamma]^{n}$ as a homogeneous affine scaling transformation (i.e. a homothety),
\item[(D)] $DG_{\gamma}$ is bounded by $C(n)/\gamma$ for some constant $C(n)$ dependent only on $n$,
\item[(E)] $DG_{\gamma}^{-1}$ is bounded by $C(n)$, independently of $\gamma$.
\end{itemize}
Such a diffeomorphism can be constructed as follows: let $D:S^{n-1}\to\bbbr$ denote the distance of a point $(r,\vartheta)\in\partial [-1,1]^{n}$,
given in radial coordinates, to the origin, as a function of the spherical coordinate $\vartheta$:
$$
S^{n-1}\ni \vartheta\mapsto (r,\vartheta)\in \partial [-1,1]^{n}\mapsto r\in\bbbr.
$$
 Fix $\eps\ll\gamma$ and let $R_{\eps}:S^{n-1}\to\bbbr$ approximate $D$
smoothly from below:\\ $0<D(\vartheta)-R_{\eps}(\vartheta)<\eps$ for $\vartheta\in S^{n-1}$.

Set $r_{\eps}=(1-0.9\gamma+2\eps)R_{\eps}$; one immediately checks that $r_{\eps}$ approximates $(1-0.9\gamma)D$ from above.
Note that $R_{\eps}-r_{\eps}\approx \gamma$. Last, let $\xi_{\gamma}:[0,1]\to \bbbr$ be a smooth, increasing function such that
$$
\xi(t)=\begin{cases} \frac{1}{2(1-\gamma)} &\text{ for }t\in [0,\eps],\\
1 &\text{ for } t\in[1-\eps,1].
\end{cases}
$$
We can find such $\xi$ with $\xi'$ bounded independently of $\gamma$; note that $\xi(t)\geq \frac{1}{2(1-\gamma)}\geq\frac{1}{2}$ for all $t$.
Then we can define $G_{\gamma}$ in radial coordinates as
$$
G_{\gamma}(r,\vartheta)=\begin{cases}\id & \text{ for }r>R_{\eps}(\vartheta)\\
(r\xi\left(\frac{r-r_{\eps}(\vartheta)}{R_{\eps}(\vartheta)-r_{\eps}(\vartheta)}\right),\vartheta) & \text{ for }r_{\eps}(\vartheta)\leq r\leq R_{\eps}(\vartheta)\\
(\frac{r}{2(1-\gamma)},\vartheta)&\text{ for }r<r_{\eps}.
\end{cases}
$$
Then $G_{\gamma}$ is a smooth diffeomorphism satisfying conditions (A) to (C). To obtain (D) and (E), note that $G_{\gamma}$ is a radial map, thus to find bounds on
$DG_{\gamma}$ and $DG_{\gamma}^{-1}$ it is enough to estimate $\partial_{r}|G_{\gamma}|$ from above and below.
$$
\partial_{r}|G_{\gamma}|=
\xi\left(\frac{r-r_{\eps}(\vartheta)}{R_{\eps}(\vartheta)-r_{\eps}(\vartheta)}\right)+
r \xi'\left(\frac{r-r_{\eps}(\vartheta)}{R_{\eps}(\vartheta)-r_{\eps}(\vartheta)}\right)\frac{1}{R_{\eps}(\vartheta)-r_{\eps}(\vartheta)},
$$
and thus
$$
\frac{1}{2}\leq \partial_{r}|G_{\gamma}|\leq \frac{C(n)}{\gamma}
$$
for some constant $C(n)$ depending only on $n$. This proves (D) and (E).

Dividing $ Q$ into $2^{n}$ dyadic cubes of edge $1/2$ and applying (rescaled by the factor $1/4$ and translated) $G_{4\beta}=G_{1-2\alpha}$
to each of them we obtain a diffeomorphism $H_{\alpha}$ of $ Q$ onto itself that shrinks the $2^{n}$ cubes $Q_{i}$ of edge $\alpha$ to concentric cubes of edge $1/4$
(extending by the same homothetic dilation to their small neighborhoods) and that is equal to $\id$ near $\partial  Q$.
Obviously, $DH_{\alpha}$ and $DH_{\alpha}^{-1}$ satisfy analogous estimates as those for $DG_{\alpha}$ and $DG_{\alpha}^{-1}$.

Let $F$ denote a diffeomorphism of $ Q$ that exchanges the `top' cubes of edge-length $1/4$ with their `bottom' counterparts, together with their
$\frac{1}{40}$-tubular neighborhoods and that is equal to identity near  $\partial  Q$. Then $F_{\alpha}=H_{\alpha}^{-1}\circ F\circ H_{\alpha}$ satisfies the conditions (a) and (b).
Moreover,
\begin{equation*}
\begin{split}
\|DF_{\alpha}\|_Q+&\|DF_{\alpha}^{-1}\|_Q\\
&\leq \|DH_{\alpha}^{-1}\|_Q\cdot \|DF\|_Q\cdot \|DH_{\alpha}\|_Q+\|DH_{\alpha}^{-1}\|_Q\cdot \|DF^{-1}\|_Q\cdot \|DH_{\alpha}\|_Q\\
&\leq C(n)/\beta.
\end{split}
\end{equation*}
\end{proof}

In the proof of Proposition~\ref{prop:main} we shall use Lemma~\ref{est:lip} to construct $F_\alpha$ for $\alpha=\alpha_k^{-1}\alpha_{k+1}$,
where $\alpha_k$ are as in Lemma~\ref{ap:est ak}.
Since
$$
\beta=\frac{1}{4}(1-2\alpha) =\frac{1}{4}\left(1-\frac{2\alpha_{k+1}}{\alpha_k}\right) = \frac{\beta_{k+1}}{\alpha_k}
$$
we have
\begin{corollary}
\label{cor:2.8}
Assume $\alpha_k$ are as in Lemma \ref{ap:est ak}.
Then for any $k=0,1,2,\ldots$ there exists a smooth diffeomorphism $F_{\alpha}$, with $\alpha=\alpha_k^{-1}\alpha_{k+1}$, that satisfies
conditions (a) and (b) of Lemma \ref{est:lip} and moreover
$$
\|DF_{\alpha_k^{-1}\alpha_{k+1}}\|_Q+
\|D(F_{\alpha_k^{-1}\alpha_{k+1}}^{-1})\|_Q<C(n)\frac{\alpha_{k}}{\beta_{k+1}}\, .
$$
\end{corollary}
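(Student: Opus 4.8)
The plan is to obtain Corollary~\ref{cor:2.8} as an immediate specialization of Lemma~\ref{est:lip}: the only work is to check that the prescribed dilation factor $\alpha=\alpha_k^{-1}\alpha_{k+1}$ is admissible in that lemma, and then to rewrite the parameter $\beta$ appearing there in terms of the sequences $(\alpha_k)$ and $(\beta_k)$.

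First I would set $\alpha:=\alpha_k^{-1}\alpha_{k+1}$ and observe that $\alpha\in(0,\tfrac12)$: positivity is clear since all $\alpha_j>0$, while $\alpha<\tfrac12$ is precisely the inequality $2\alpha_{k+1}<\alpha_k$ of Lemma~\ref{ap:est ak}(b). Hence Lemma~\ref{est:lip} applies to the $2^n$ dyadic (centers with all coordinates $1/4$ or $3/4$) cubes of edge-length $\alpha$ inside $Q$, and produces a smooth diffeomorphism $F_\alpha\colon Q\to Q$ satisfying conditions (a) and (b) of that lemma, which are verbatim the conditions (a) and (b) asserted in the corollary.

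It then remains to identify the constant in estimate (c) of Lemma~\ref{est:lip}. For this choice of $\alpha$ the associated parameter is
$$
\beta=\frac{1-2\alpha}{4}=\frac{1}{4}\Big(1-\frac{2\alpha_{k+1}}{\alpha_k}\Big)=\frac{\alpha_k-2\alpha_{k+1}}{4\alpha_k}=\frac{\beta_{k+1}}{\alpha_k},
$$
where the last equality uses the definition $\beta_{k+1}=\tfrac14(\alpha_k-2\alpha_{k+1})$ from Lemma~\ref{ap:est bk}. Substituting into the bound $\|DF_\alpha\|_Q+\|D(F_\alpha^{-1})\|_Q<C(n)/\beta$ of Lemma~\ref{est:lip} yields exactly $\|DF_\alpha\|_Q+\|D(F_\alpha^{-1})\|_Q<C(n)\,\alpha_k/\beta_{k+1}$, which is the claim. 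I expect no real obstacle here — the statement is a bookkeeping corollary — the only point one must not overlook being the verification $\alpha<\tfrac12$ (so that Lemma~\ref{est:lip} is genuinely applicable and $\beta>0$), which is guaranteed by property (b) of Lemma~\ref{ap:est ak}.
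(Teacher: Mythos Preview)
Your proof is correct and follows exactly the paper's approach: the paper derives the corollary from Lemma~\ref{est:lip} via the same computation $\beta=\tfrac14(1-2\alpha)=\beta_{k+1}/\alpha_k$. If anything, you are slightly more careful in explicitly verifying the hypothesis $\alpha<\tfrac12$ from Lemma~\ref{ap:est ak}(b), which the paper leaves implicit.
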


The next lemma is similar to \cite[Lemma 3.8]{GH}, but some estimates are new.  Let us recall the notation used in \cite{GH}.
$$
\|D^{2}\Phi\|_{\Omega}=\sup_{x\in\Omega}\|D^{2}\Phi(x)\|=
\sup_{x\in\Omega}\sup_{|\xi|=|\eta|=1}\left|\sum_{i,j=1}^{n} \frac{\partial^{2}\Phi}{\partial x_{i}\partial x_{j}}(x)\xi_{i}\eta_{j}\right|.
$$
It follows from Taylor's theorem that if $\Phi\in C^\infty$ and $x\in B(x_o,r)\subset\Omega$, then
\begin{equation}
\label{cds34}
|\Phi(x)-\Phi(x_o)-D\Phi(x_o)(x-x_o)|\leq \Vert D^2\Phi\Vert_\Omega |x-x_o|^2.
\end{equation}

\begin{lemma}
\label{goodapprox}
Let $G:\Omega\to\bbbr^{n}$, $\Omega\subset\bbbr^{n}$, be a $C^\infty$-diffeomorphism such that
$$
M=\|DG\|_{\Omega}+\|(DG)^{-1}\|_{\Omega}+\|D^{2}G\|_{\Omega}<\infty.
$$
Let $E=\overline{B}(x_{o},2r) \Subset \Omega$, $B=B(x_o,r)$, $D=\overline{B}(x_{o},r/2)$
and let
$$
T(x)=G(x_o)+DG(x_o)(x-x_0)
$$
be the tangent map to $G$ at $x_o$. If
\begin{equation}
\label{r small}
r<\left(10 (M+1)^{2}2^{\ell}\right)^{-1}\quad\text{ for some }\ell\in \bbbn,
\end{equation}
then
\begin{itemize}
\item[(a)] $\diam G(B)<2^{-\ell}$,
\item[(b)] $T(D)\subset G(B)\subset T(E)$,
\item[(c)] there is a  $C^\infty$-diffeomorphism $\tilde{G}$ which coincides with $G$ on
$\Omega\setminus B(x_{o},4r/5)$ and coincides with $T$ on $B(x_{o},3r/5)$ such that
\item[(d)] the mapping $\tilde{G}$ is bi-Lipschitz on $\overline{B}$ with the bi-Lipschitz constant  $\Lambda=2M$, i.e.
$$
\Lambda^{-1}|x-y|\leq|\tilde{G}(x)-\tilde{G}(y)|\leq \Lambda |x-y|
\quad
\text{for all $x,y\in \overline{B}$.}
$$
\end{itemize}
\end{lemma}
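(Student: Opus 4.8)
The plan is to set up everything around the second-order Taylor estimate \eqref{cds34}. Under the hypothesis \eqref{r small} we will have $r$ extremely small compared to $M$, so the deviation of $G$ from its tangent map $T$ on $B(x_o,2r)$ is of order $M r^2$, which is negligible compared to the linear scale $M^{-1} r$ on which $T$ varies. First I would record the basic consequences: since $\|DG\|_\Omega\le M$, for $x\in B=B(x_o,r)$ one has $|G(x)-G(x_o)|\le Mr$, hence $\diam G(B)\le 2Mr$; combining with \eqref{r small} (and $2M\le 10(M+1)^2 2^\ell$) gives $\diam G(B)<2^{-\ell}$, proving (a). For (b), use \eqref{cds34}: $|G(x)-T(x)|\le M r^2$ for $x\in E=\overline B(x_o,2r)$. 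Since $T$ is an affine map whose linear part $DG(x_o)$ has $\|(DG(x_o))^{-1}\|\le M$, the image $T(E)$ contains the ball around $G(x_o)$ of radius $2r/M$, and $T(D)$ with $D=\overline B(x_o,r/2)$ is contained in the ball of radius $r/(2M)\cdot\|DG(x_o)\|$... more carefully: $T(\overline B(x_o,\rho))=G(x_o)+DG(x_o)(\overline B(0,\rho))$, which contains $\overline B(G(x_o),\rho/M)$ and is contained in $\overline B(G(x_o),\rho M)$. So $T(D)\subseteq \overline B(G(x_o), rM/2)$ and $G(B)\supseteq$? — here one needs the inverse direction: $G(B)\supseteq T(D)$ follows because for $y\in T(D)$ the point $T^{-1}(y)\in D$ and $|G(T^{-1}(y))-y|=|G(T^{-1}(y))-T(T^{-1}(y))|\le Mr^2$, which by \eqref{r small} is far smaller than the distance $r/2\cdot(1/M)$ from $\partial(\text{shell})$, so a degree/continuity argument (or the quantitative inverse function theorem) places $y\in G(B)$. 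Symmetrically $G(B)\subseteq T(E)$ since $|G(x)-G(x_o)|\le Mr$ forces $G(x)\in \overline B(G(x_o),Mr)\subseteq T(\overline B(x_o,M^2 r))\subseteq T(E)$ once $M^2r<2r$, i.e. using \eqref{r small} again. This proves (b).

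For (c) I would construct $\tilde G$ by the standard partition-of-unity interpolation between $G$ and $T$. Fix a smooth cutoff $\eta:\bbbr^n\to[0,1]$ with $\eta\equiv 1$ on $B(x_o,3r/5)$, $\eta\equiv 0$ outside $B(x_o,4r/5)$, and $\|D\eta\|\le C(n)/r$, $\|D^2\eta\|\le C(n)/r^2$. Set
$$
\tilde G(x)=\eta(x)\,T(x)+(1-\eta(x))\,G(x)=G(x)+\eta(x)\,(T(x)-G(x)).
$$
Then $\tilde G=G$ outside $B(x_o,4r/5)$ and $\tilde G=T$ on $B(x_o,3r/5)$, so we only need to check $\tilde G$ is a diffeomorphism on $\overline B$ with the claimed bi-Lipschitz bound, which is exactly (d). Write $\tilde G(x)-T(x)=(1-\eta(x))(G(x)-T(x))$ and differentiate: using \eqref{cds34}, $|G(x)-T(x)|\le Mr^2$, and $|DG(x)-DT(x)|=|DG(x)-DG(x_o)|\le \|D^2G\|_\Omega\cdot|x-x_o|\le M r$ on $\overline B$. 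Hence
$$
\|D\tilde G(x)-DT(x)\|\le |1-\eta(x)|\,\|DG(x)-DT(x)\|+\|D\eta(x)\|\,|G(x)-T(x)|\le Mr+\frac{C(n)}{r}\cdot Mr^2=C(n)Mr.
$$
By \eqref{r small}, $C(n)Mr<\tfrac{1}{2M}$ (absorbing $C(n)$ into the "$10$" — here one may need to slightly strengthen the constant in \eqref{r small}, or note $C(n)$ is harmless since the statement only claims bi-Lipschitz constant $2M$ with $M=\|DG\|+\|(DG)^{-1}\|+\|D^2G\|\ge 1$; I'd remark that if the bare $10$ is insufficient one replaces it by a dimensional constant throughout). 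Since $\|(DT)^{-1}\|=\|(DG(x_o))^{-1}\|\le M$, the perturbation estimate $\|D\tilde G(x)-DT(x)\|<\tfrac{1}{2M}\le \tfrac{1}{2}\|(DT)^{-1}\|^{-1}$ gives, by the standard Neumann-series argument, that $D\tilde G(x)$ is invertible with
$$
\tfrac{1}{2M}\,|v|\le \tfrac12\|DT\|^{-1}|v|\le |D\tilde G(x)v|\le \bigl(\|DT\|+\tfrac{1}{2M}\bigr)|v|\le 2M\,|v|
$$
for all $x\in\overline B$, $v\in\bbbr^n$, where we used $\|DT\|=\|DG(x_o)\|\le M$. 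Integrating along segments in the convex set $\overline B$ yields the two-sided bound $\Lambda^{-1}|x-y|\le|\tilde G(x)-\tilde G(y)|\le\Lambda|x-y|$ with $\Lambda=2M$, proving (d); in particular $\tilde G$ is injective on $\overline B$, hence (together with $\tilde G=G$ near $\partial B$ and $G$ being a global diffeomorphism) $\tilde G$ is a $C^\infty$-diffeomorphism of $\Omega$, finishing (c).

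The main obstacle is purely bookkeeping of constants: making sure the single universal "$10$" in \eqref{r small} (together with the factor $2^\ell$ that is only needed for part (a)) really dominates all the dimensional constants $C(n)$ that appear from differentiating the cutoff $\eta$ in the construction of $\tilde G$. The clean way is to prove (a) directly from \eqref{r small} as above, and for (c)--(d) only use the weaker consequence $r<(10(M+1)^2)^{-1}\le\bigl(C(n)(M+1)^2\bigr)^{-1}$ after absorbing $C(n)$ — or, if one wants the literal statement, to note that one is always free to shrink $r$ and that the hypothesis is used in the construction precisely to guarantee $\tilde G$ stays within the "once-differentiable perturbation of an affine map" regime. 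Everything else — parts (a) and (b) — is an immediate consequence of $\|DG\|\le M$, $\|D^2G\|\le M$ and \eqref{cds34}, with no genuine difficulty.
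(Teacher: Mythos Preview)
Your overall strategy is the same as the paper's, but there is one genuine error and one place where you lose a clean trick that the paper uses.

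\medskip
\textbf{The error in (b).} For the inclusion $G(B)\subset T(E)$ you argue that $|G(x)-G(x_o)|\le Mr$ puts $G(x)$ in $\overline B(G(x_o),Mr)\subseteq T(\overline B(x_o,M^2r))\subseteq T(E)$ ``once $M^2r<2r$''. But $M^2r<2r$ means $M^2<2$, which is not assumed and is \emph{not} a consequence of \eqref{r small}. The first-order bound $|G(x)-G(x_o)|\le Mr$ is simply too crude here; you must use the second-order Taylor estimate $|G(x)-T(x)|\le Mr^2$ from \eqref{cds34}. The paper does exactly this: it shows that the distance between the ellipsoids $T(\partial E)$ and $T(\overline B)$ is at least $r/M$, and since $Mr^2<r/M$ (this \emph{does} follow from \eqref{r small}) one gets $G(B)\cap T(\partial E)=\emptyset$, hence $G(B)\subset T(E)$. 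Equivalently, you could note $|T^{-1}(G(x))-x|\le M\cdot Mr^2$ so $T^{-1}(G(x))\in B(x_o,r+M^2r^2)\subset E$ once $M^2r\le 1$, which \emph{is} implied by \eqref{r small}.

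\medskip
\textbf{The constant $10$ and the radial cutoff.} You are right to worry about the dimensional constant $C(n)$ coming from $\|D\eta\|\le C(n)/r$, and you hedge by saying one might need to enlarge the ``$10$'' in \eqref{r small}. The paper avoids this entirely: it takes a \emph{one-dimensional} cutoff $\phi:\bbbr\to[0,1]$ with $\phi(t)=0$ for $t\le 3/5$, $\phi(t)=1$ for $t\ge 4/5$, $\|\phi'\|_\infty\le 9$, and sets (with $x_o=0$)
\[
\tilde G(x)=T(x)+\phi(|x|/r)\bigl(G(x)-T(x)\bigr)=T(x)+L(x).
\]
Then $\|DL(x)\|\le \frac{9}{r}\cdot M r^2 + M r = 10Mr$ with no dimensional factor, and \eqref{r small} (with $\ell\ge 1$) gives $10Mr<\frac{1}{2(M+1)^2}<\frac{1}{2M}$, exactly what is needed for the lower Lipschitz bound and invertibility. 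So the literal ``$10$'' is not an accident; it matches the radial construction.

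\medskip
\textbf{A cleaner route to $T(D)\subset G(B)$.} Your degree/inverse-function sketch can be made to work, but the paper's argument is slicker: once $\tilde G$ is built and shown to be a diffeomorphism, one simply observes that $\tilde G=G$ on $\partial B$, hence $\tilde G(B)=G(B)$, while $\tilde G|_D=T$, so $T(D)=\tilde G(D)\subset\tilde G(B)=G(B)$. No extra analysis is required.
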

\begin{proof}[Sketch of the proof]
Arguments that are similar to those that appear in the proof of Lemma~3.8 in \cite{GH} will be sketched only;
for details we refer the reader to \cite{GH}.

Note that (a) follows immediately from the condition \eqref{r small} and the bound $\|DG\|_\Omega < M$
since
$$
|G(x)-G(y)|\leq \|DG\|_\Omega |x-y|\leq M\cdot 2r<2^{-\ell}
\quad
\text{for all $x,y\in B$.}
$$
In what follows, we assume, for simplicity, that $x_o=0$, i.e. that the balls $B$, $D$ and $E$ are centered at the origin.

The mapping $\tilde{G}$ in (c), interpolating between $G$ and $T$ is given as
$$
\tilde{G}(x)=T(x)+\phi(|x|/r) (G(x)-T(x))=T(x)+L(x),
$$
where $\phi\in C^\infty (\bbbr,[0,1])$ is non-decreasing, $\phi(t)=0$  for $t\leq 3/5$ and $\phi(t)=1$ for $t\geq 4/5$; $\|\phi'\|_\infty\leq 9$.
Clearly $\tilde{G}$ coincides with $G$ on
$\Omega\setminus B(x_{o},4r/5)$ and coincides with $T$ on $B(x_{o},3r/5)$, but in order to complete the proof of (c) we need to prove that
$\tilde{G}$ is a diffeomorphism.

Elementary calculations show that for $x\in \overline{B}$ we have $\|DL(x)\|<10Mr$, which in turn allows us, for $x,y\in\overline{B}$,
to estimate $|\tilde{G}(x)-\tilde{G}(y)|$ from below:
\begin{equation}
\label{eq: openness}
|\tilde{G}(x)-\tilde{G}(y)|\geq \frac{1}{2}|DG(0)(x-y)|.
\end{equation}
The inequality \eqref{eq: openness} shows that $\tilde{G}$ is injective on $\overline{B}$, which suffices to prove that $G$ is a homeomorphism.
Taking in $\eqref{eq: openness}$ $y=x+\tau v$ (for some arbitrary $v\in\bbbr^n$ and sufficiently small $\tau$) gives
$$
\frac{1}{2} |DG(0)\tau v|\leq |\tilde{G}(x)-\tilde{G}(x+\tau v)|,
$$
thus
$$
|D\tilde{G}(x) v|=\lim_{\tau\to 0}\left|\frac{\tilde{G}(x+\tau v)-\tilde{G}(x)}{\tau}\right|\geq \frac{1}{2}|DG(0)v|,
$$
and non-degeneracy of $DG(0)$ implies non-degeneracy of $D\tilde{G}(x)$, thus $\tilde{G}$ is a diffeomorphism. This completes the proof of (c).

Since the diffeomorphisms $G$ and $\tilde{G}$ agree on the boundary of the ball $B$, $G(B)=\tilde{G}(B)$ so
$T(D)=\tilde{G}(D)\subset \tilde{G}(B)=G(B)$,
which is the first inclusion in (b).

To prove the other inclusion, $G(B)\subset T(E)$, we argue as follows.
If $x\in B$, then by \eqref{cds34}
$$
|G(x)-T(x)|\leq \Vert D^2G\Vert_\Omega |x|^2<Mr^2.
$$
On the other hand, the distance between the ellipsoids $T(\partial E)$ and $T(B)$ is larger than $Mr^2$
\begin{equation}
\label{costam}
\dist \big(T(\partial E),T(B)\big)>Mr^2
\end{equation}
so $G(B)\cap T(\partial E)=\emptyset$ and hence $G(B)\subset T(E)$. To prove \eqref{costam},
observe that the distance between the ellipsoids is minimized along the shortest semi-axis
(this fact follows easily from geometric arguments or from the Lagrange multiplier theorem), i.e.
if $x\in\partial B$ is such that $T(x)\in T(\partial B)$ and $T(2x)\in T(\partial E)$ are on a
shortest semi-axis of the ellipsoid $T(\partial E)$, then
$$
\dist \big(T(\partial E),T(B)\big)=|T(2x)-T(x)|=|DG(0)x|\geq \frac{|x|}{\Vert (DG(0))^{-1}\Vert}>\frac{r}{M}>Mr^2.
$$

Finally, using the estimate $\Vert DL(x)\Vert \leq 10Mr$ and the fact that ${\|DT(x)\|=\|DG(0)\|\leq M}$ we immediately get
$$
\|D\tilde{G}(x)\|\leq \|DT(x)\|+\|DL(x)\|\leq (1+10r)M\leq 2M.
$$
Hence the Lipschitz constant of $\tilde{G}$ on $\overline{B}$ is bounded by $2M$.
To get an estimate for the Lipschitz constant of $\tilde{G}^{-1}$, we return to \eqref{eq: openness}:
\begin{equation*}
\begin{split}
2M |\tilde{G}(x)-\tilde{G}(y)|&\geq 2 \|(DG)^{-1}\|_\Omega \cdot \frac{1}{2}|DG(0)(x-y)|\\
&\geq |DG(0)^{-1}DG(0)(x-y)|=|x-y|,
\end{split}
\end{equation*}
which completes the proof of (d) and hence that of the lemma.
\end{proof}

\section{Proof of Proposition~\ref{prop:main}}
\label{flipping}

The proof is a slight reworking of the construction in \cite[Lemma 2.1]{GH}; the main difference is that we no longer require $\Phi$ to be measure preserving,
and in return we obtain estimates on the modulus of continuity of $\Phi$ and $\Phi^{-1}$.

Let the sequences $(\alpha_{k})$ and $(\beta_{k})$ be defined as in Lemma \ref{ap:est ak} and Lemma \ref{ap:est bk}.

Let  $Q_1^{k},\ldots,Q_{2^n}^{k}$ be the closed $n$-dimensional cubes inside the unit cube $Q=[0,1]^n$,
of edge-length $\alpha_{k-1}^{-1}\alpha_k<1/2$, with dyadic (i.e. with all coordinates equal $1/4$ or $3/4$) centers $q_1,\ldots,q_{2^n}$, $q_j=(q_{j,1},\ldots,q_{j,n-1},q_{j,n})$, such that $q_{2^{n-1}+j}=(q_{j,1},\ldots,q_{j,n-1},1-q_{j,n})$. That means the first $2^{n-1}$ cubes are in the top layer and the
last $2^{n-1}$ are in the bottom layer, right below the corresponding cubes from the upper layer.

Our construction is iterative. The starting point is the diffeomorphism $\Phi_{1}=F_{\alpha_0^{-1}\alpha_{1}}$, constructed in
Corollary \ref{cor:2.8}. This diffeomorphism rigidly rearranges (translates) cubes $Q_j^1$ of the edge-length
$\alpha_0^{-1}\alpha_1=\alpha_1$ inside the unit cube $Q$. Moreover, with  each of the cubes $Q_{j}^{1}$, $\Phi_{1}$
translates also its neighborhood consisting of all the points with distance less than $\beta_{1}/10$.

 The diffeomorphism $\Phi_2$
coincides with $\Phi_1$ on $Q\setminus\bigcup_{j=1}^{2^n} Q_j^1$, but in the interior of each cube $Q_j^1$, rearranged by the diffeomorphism $\Phi_1$,
$\Phi_2$ is a rescaled and translated version of the diffeomorphism $F_{\alpha_{1}^{-1}\alpha_{2}}$.
It rearranges $2^{2n}$ cubes of the edge-length
$\alpha_1\cdot\alpha_1^{-1}\alpha_2=\alpha_2$.
Since the diffeomorphism $F_{\alpha_1^{-1}\alpha_2}$ is identity near the boundary of the cube $Q$, the rescaled versions of it, applied to the cubes $Q^1_j$, are identity near boundaries of
these cubes and hence the resulting mapping $\Phi_2$ is a smooth diffeomorphism.
The diffeomorphism $\Phi_3$ coincides with $\Phi_2$ outside the $2^{2n}$ cubes of the second generation rearranged by $\Phi_2$ and it is
a rescaled and translated version of the diffeomorphism $F_{\alpha_{2}^{-1}\alpha_{3}}$ inside each of the cubes rearranged by the diffeomorphism $\Phi_2$. It rearranges $2^{3n}$ cubes of the edge-length
$\alpha_2\cdot\alpha_2^{-1}\alpha_3=\alpha_3$ etc.

The diffeomorphism $\Phi_{k}$ rearranges $2^{kn}$ $k$-th generation cubes $Q^{k}_{j}$ (together with their small neighborhoods). Denoting the union of all these $k$-th generation cubes by $\mathcal{Q}_k$, we obtain a descending family of compact sets. Let
$$
A=\bigcap_{k=1}^\infty \mathcal{Q}_k.
$$
By (c), Lemma \ref{ap:est ak}, $|\mathcal{Q}_k|=2^{kn}\alpha_k^n$ has positive limit, thus  $A$ is a Cantor set of positive measure.

Thanks to the fact that at each step the subsequent modifications leading from $\Phi_{k}$ to $\Phi_{k+1}$ happen only in the $k$-th generation cubes, of diameter $\sqrt{n}\alpha_{k}$,  the sequence $\Phi_{k}$ is convergent in the uniform metric in the space of homeomorphisms. Therefore, the limit mapping $\Phi$ is a homeomorphism.

On the Cantor set $A$, $\Phi$ acts as a reflection $(x_1,\ldots,x_{n-1},x_n)\mapsto (x_1,\ldots,x_{n-1},1-x_n)$, and outside $A$, $\Phi$ is a diffeomorphism (for each $x\in Q\setminus A$ there exists $k\in \bbbn$ such that $\Phi$ restricted to a small neighborhood of $x$ coincides with $\Phi_{k}$). Thus $\Phi$ is a.e. approximately differentiable and its approximate derivative is equal to \eqref{E4} in the density points of $A$. Also, $\Phi$ has the Lusin property.

We need yet to prove the continuity estimates for $\Phi$.

Let us note the following observations on the construction of subsequent generations of cubes in our example:
\begin{itemize}
\item Each $k$-th generation cube $Q^{k}_{j}$ has a well defined `ancestor' cube $\A^{\ell}(Q_{j}^{k})$ in generation $\ell$,
for $0\leq\ell\leq k$, i.e. for $\ell=0,1,\ldots,k$ there exists a unique $\ell$-th generation cube $\A^{\ell}(Q_{j}^{k})$ such that $Q_{j}^{k}\subset \A^{\ell}(Q_{j}^{k})$.
\item Whenever $\ell<k$, $\dist(Q_{j}^{k}, Q\setminus \A^{\ell}(Q^{k}_{j}))\geq \beta_{\ell+1}+\beta_{\ell+2}+\cdots+\beta_{k}$.
\item Denote the union of the family of all $\ell$-th generation cubes by $\Q_{\ell}$, for $\ell=0,1,\ldots$
Let $k\in\bbbn$ and fix $\ell<k$. The mapping $\Phi_{k}$, restricted to $\breve{Q}^{\ell}_{j}:=Q^{\ell}_{j}\setminus \Q_{\ell+1}$,
coincides with $F_{\alpha^{-1}_\ell\alpha_{\ell+1}}$, translated and rescaled by a factor of $\alpha_{\ell}$.
Since the rescaling is both in the domain and in the range by the same factor, Corollary \ref{cor:2.8} yields
$$
\Vert D\Phi_k\Vert_{\breve{Q}^{\ell}_{j}} =
\Vert DF_{\alpha_\ell^{-1}\alpha_{\ell+1}}\Vert_Q \leq C(n)\frac{\alpha_\ell}{\beta_{\ell+1}} = C(n)\lambda_{\ell+1}.
$$
Moreover, by construction, $\Phi_{k}$ in the set
$$
N^{\ell}_{j}:=\{x\in Q~:~0<\dist(x,Q^{\ell}_{j})<\beta_{\ell}/10\}
$$
is an isometry -- a translation.
Similarly, $\Phi_{k}$ is a translation on the $k$-th generation cubes. Therefore on these sets $\|D\Phi_k\|=1$  (cf. Figure~\ref{fig:Phi2}).

\end{itemize}
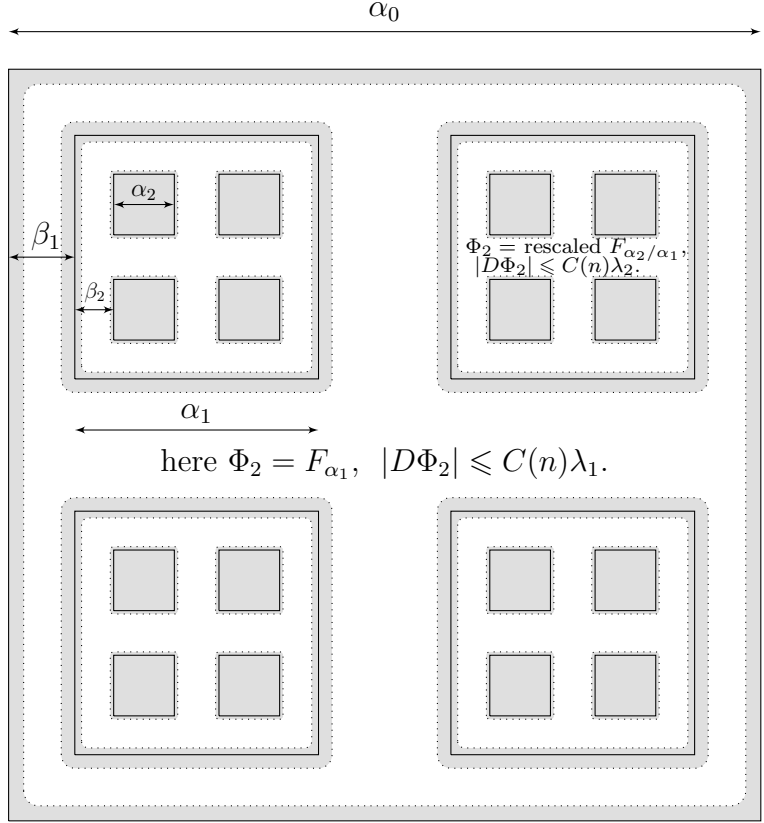
\begin{figure}
\caption{In white areas $\Phi_2$ is either $F_{\alpha_1}$ or a composition of a translation with a properly rescaled mapping $F_{\alpha_2/\alpha_1}$. In the shaded areas $\Phi_2$ is an isometry: translation or identity.}
\bigskip
\begin{tikzpicture}[>=latex']

\filldraw[fill=lightgray!50!white] (-5,-5)--(-5,5)--(5,5)--(5,-5)--(-5,-5);
\filldraw[dotted,fill=white] (-4.8,-4.6) --(-4.8,4.6) arc [radius=0.2, start angle=180, end angle=90] -- (4.6,4.8) arc [radius=0.2, start angle=90, end angle=0] -- (4.8,-4.6) arc [radius=0.2, start angle=0, end angle=-90] -- (-4.6,-4.8) arc [radius=0.2, start angle=-90, end angle=-180];

\foreach \x in {-2.5,2.5} {\foreach \y in {-2.5,2.5} {
\begin{scope}[shift ={(\x,\y)}, scale=0.9]
\filldraw[dotted,fill=lightgray!50!white] (-2,-1.8) --(-2,1.8) arc [radius=0.2, start angle=180, end angle=90] -- (1.8,2) arc [radius=0.2, start angle=90, end angle=0] -- (2,-1.8) arc [radius=0.2, start angle=0, end angle=-90] -- (-1.8,-2) arc [radius=0.2, start angle=-90, end angle=-180];
\filldraw[fill=lightgray!50!white] (-1.8,-1.8)--(1.8,-1.8)--(1.8,1.8)--(-1.8,1.8)--(-1.8,-1.8);
\filldraw[dotted,fill=white] (-1.7,-1.6) --(-1.7,1.6) arc [radius=0.1, start angle=180, end angle=90] -- (1.6,1.7) arc [radius=0.1, start angle=90, end angle=0] -- (1.7,-1.6) arc [radius=0.1, start angle=0, end angle=-90] -- (-1.6,-1.7) arc [radius=0.1, start angle=-90, end angle=-180];

\end{scope}
}}
\foreach \u in {-2.5,2.5} {\foreach \v in {-2.5,2.5} {
\begin{scope}[shift={(\u,\v)}, scale=0.28]
\foreach \x in {-2.5,2.5} {\foreach \y in {-2.5,2.5} {
\begin{scope}[shift ={(\x,\y)}, scale=0.8]
\filldraw[dotted,fill=lightgray!50!white] (-2,-1.8) --(-2,1.8) arc [radius=0.2, start angle=180, end angle=90] -- (1.8,2) arc [radius=0.2, start angle=90, end angle=0] -- (2,-1.8) arc [radius=0.2, start angle=0, end angle=-90] -- (-1.8,-2) arc [radius=0.2, start angle=-90, end angle=-180];
\filldraw[fill=lightgray!50!white] (-1.8,-1.8)--(1.8,-1.8)--(1.8,1.8)--(-1.8,1.8)--(-1.8,-1.8);
\end{scope}
}}
\end{scope}
}}
\draw[<->] (-5,5.5)--(5,5.5);
\node[above] at (0,5.5) {$\alpha_0$};
\draw[<->] (-4.12,0.2)--(-0.88,0.2);
\node[above] at (-2.5,0.15) {$\alpha_1$};
\draw[<->] (-3.6,3.2)--(-2.8,3.2);
\node[above, scale=0.8] at (-3.2,3.15) {$\alpha_2$};
\draw[<->] (-5,2.5)--(-4.12,2.5);
\node[above] at (-4.5,2.45) {$\beta_1$};
\draw[<->] (-4.12,1.8)--(-3.6,1.8);
\node[above,scale=0.7] at (-3.85,1.8) {$\beta_2$};

\node[above] at (0,-0.6) {here $\Phi_2=F_{\alpha_1}$,\,\, $|D\Phi_2|\leqslant C(n)\lambda_1$.};
\node at (2.5,2.6) {\begin{tiny} $\Phi_2=$ rescaled $F_{\alpha_2/\alpha_1}$,\end{tiny}};
\node at (2.3,2.37) {\begin{tiny}$|D\Phi_2|\leqslant C(n)\lambda_2$.\end{tiny}};

\end{tikzpicture}
\label{fig:Phi2}
\end{figure}

Next, we prove that, for any $\ell$, the diffeomorphism $\Phi_{\ell}$ satisfies the modulus of continuity estimate: for any $x,y\in Q$,
\begin{equation}\label{moc est}
|\Phi_{\ell}(x)-\Phi_{\ell}(y)|\leq C(n,\phi) \phi(|x-y|),
\end{equation}
with the constant $C=C(n,\phi)$ independent on $\ell$.

We prove it by induction. Setting $\Phi_{o}=\id$ we may assume that \eqref{moc est} holds for $\ell=0$. Indeed, according to Lemma~\ref{ap:estpsi}(a) we have
$$
|\Phi_o(x)-\Phi_o(y)|=|x-y|\leq \frac{\sqrt{n}}{\phi(\sqrt{n})}\phi(|x-y|).
$$
In the inductive step, assume that for some $k\in \bbbn$ the estimate \eqref{moc est} holds for all $\ell<k$.

Fix $x,y\in Q$, $x\neq y$. If both $x$ and $y$ lie outside the $(k-1)$-th generation cubes $\Q_{k-1}$, then, by the inductive assumption,
$$
|\Phi_{k}(x)-\Phi_{k}(y)|=|\Phi_{k-1}(x)-\Phi_{k-1}(y)|\leq C(n,\phi) \phi(|x-y|),
$$
since $\Phi_{k}$ and $\Phi_{k-1}$ coincide outside $\Q_{k-1}$. Thus, in what follows, we assume that $x\in Q^{k-1}_{j}$, where $Q^{k-1}_{j}\subset \Q_{k-1}$ is a $(k-1)$-th generation cube.

The sequence $(\beta_{i})$ is decreasing to $0$, thus either $|x-y|> \beta_{1}/10$, or there exists $m\in\bbbn$ such that $\beta_{m+1}/10<|x-y|\leq \beta_{m}/10$.

If $|x-y|> \beta_{1}/10$, then
$$
|\Phi_{k}(x)-\Phi_{k}(y)|\leq \diam Q=\sqrt{n}\leq \frac{\sqrt{n}}{\phi(\beta_{1}/10)}\phi(|x-y|)=C(n,\phi)\phi(|x-y|).
$$

If $\beta_{m+1}/10<|x-y|\leq \beta_{m}/10$  and $1<m\leq k-1$, then
$$
|x-y|\leq\frac{\beta_{m}}{10}<\beta_{m}+\cdots+\beta_{k-1}\leq \dist(Q_{j}^{k-1}, Q\setminus \A^{m-1}(Q_{j}^{k-1})).
$$
This shows that $x,y\in \A^{m-1}(Q_{j}^{k-1})$ and
\begin{equation}
\label{est by am-1}
|\Phi_{k}(x)-\Phi_{k}(y)|\leq \diam (\Phi_{k}(\A^{m-1}(Q_{j}^{k-1})))=\sqrt{n}\alpha_{m-1},
\end{equation}
because $\Phi_{k}(\A^{m-1}(Q_{j}^{k-1}))$ is again one of the $(m-1)$-th generation cubes.

Last, if $\beta_{m+1}/10<|x-y|\leq \beta_{m}/10$  and $m\geq k$, then
$$
\dist(y,Q_{j}^{k-1})\leq |x-y|\leq \frac{\beta_m}{10}\leq \frac{\beta_{k}}{10}< \frac{\beta_{k-1}}{10},
$$
thus either $y\in Q_{j}^{k-1}$, or $y\in N_{j}^{k-1}$.
We have, as observed at the beginning of the proof,
\begin{equation*}
\begin{split}
\|D\Phi_{k}\|_{Q_{j}^{k-1}\cup N_{j}^{k-1}}
&\leq \max\{\|D\Phi_{k}\|_{N_{j}^{k-1}},\|D\Phi_{k}\|_{\breve{Q}_{j}^{k-1}},\|D\Phi_{k}\|_{\Q_{k}}\}\\
&=\max\{1,C(n)\lambda_{k},1\}=C(n)\lambda_{k},
\end{split}
\end{equation*}
Since $x,y\in Q_{j}^{k-1}\cup N_{j}^{k-1}$, the mean value theorem yields
$$
|\Phi_{k}(x)-\Phi_{k}(y)|\leq C(n)\lambda_{k}|x-y|
$$
By Lemma \ref{ap:est l}, $\lambda_{k}\leq C(\phi)\lambda_{m}$, which gives
\begin{equation}
\label{est2 by am-1}
|\Phi_{k}(x)-\Phi_{k}(y)|\leq C_{1}(n,\phi)\lambda_{m}\frac{\beta_{m}}{10}=C(n,\phi)\alpha_{m-1}.
\end{equation}
Now, Lemmata \ref{ap:est ak}, \ref{ap:est bk} and inequality \eqref{conc psi} yield
\begin{eqnarray*}
\alpha_{m-1}
&\leq&
2^{-(m-1)}=2^{N+2}2^{-(N+(m+1))}\leq 2^{N+2}\phi(4\beta_{m+1})\\
&\leq&
40\cdot 2^{N+2}\phi(\beta_{m+1}/10)\leq 40\cdot 2^{N+2}\phi(|x-y|).
\end{eqnarray*}
This, combined with the estimates in \eqref{est by am-1} and \eqref{est2 by am-1}, proves the estimate \eqref{moc est}.

Note that the inverse mapping $\Phi_{\ell}^{-1}$ is constructed in exactly the same way as $\Phi_{\ell}$; the only difference is that in
Lemma~\ref{est:lip}, in place of the diffeomorphism $F$ exchanging `top' and `bottom layer' cubes of edge length $\frac{1}{4}$, we use
its inverse $F^{-1}$, which obviously possesses the same properties as $F$ (it is a diffeomorphism that is identity near $\partial Q$ and it
exchanges `top' and `bottom layer' cubes of edge length $\frac{1}{4}$, together with their $\frac{1}{40}$-tubular neighborhoods). Therefore, an
estimate analogous to \eqref{moc est} holds for $\Phi_{\ell}^{-1}$ as well (possibly with a different constant):
$$
|\Phi_{\ell}^{-1}(x)-\Phi_{\ell}^{-1}(y)|\leq C(n,\phi) \phi(|x-y|).
$$
Passing to the limit in the uniform metric $d$ in the space of homeomorphisms we see that if $\Phi=\lim_{\ell\to\infty} \Phi_{\ell}$, then
$$
|\Phi(x)-\Phi(y)|\leq C(n,\phi) \phi(|x-y|)
$$
and
$$
|\Phi^{-1}(x)-\Phi^{-1}(y)|\leq C(n,\phi) \phi(|x-y|).
$$
\hfill $\Box$

\begin{remark}
By the definition of $\beta_{k}$ (cf. Lemma \ref{ap:est bk}) one immediately sees that the convergence condition (2) on $\phi$ in
Proposition~\ref{prop:main} is natural and necessary for our construction: (2) holds if and only if the series $\sum 2^{k} \beta_{k}$ is convergent.
Recall (Figure~\ref{fig:Phi2}) that $\beta_{k}$ is the distance from the boundary of the $(k-1)$-th generation cube to the $k$-th generation cube,
thus $4 \beta_{1}=1-2\alpha_{1}<1$, $4\beta_{1}+8\beta_{2}=1-4\alpha_{2}<1$ and so on,
$$
2\beta_{1}+4\beta_{2}+8\beta_{3}+\cdots+2^{k}\beta_{k}=\frac{1}{2}\left(1-2^{k}\alpha_{k}\right)<\frac{1}{2},
$$
thus (2) is necessary for the sequence $\alpha_{k}$ to be well adapted for our construction.
\end{remark}

\bigskip

\section{Proof of Theorem \ref{thm:main}}
\label{Pr1.2}

Let $\phi$ satisfy conditions (1), (2) and (3) of Theorem~\ref{thm:main} and let $\psi$ be obtained from $\phi$ according to the construction given in the proof of Lemma \ref{better}.
In particular, constants dependent on the choice of $\psi$ depend on $\phi$ only.

The starting point of our iterative construction is the homeomorphism $F_{1}=\Phi$, given by Proposition~\ref{prop:main}, but with estimates dependent on $\psi$ instead of $\phi$.
Then $F_{1}$ is a.e. approximately differentiable in $Q$, it has the Lusin property, $F_{1}=\id$ near $\partial Q$ and there exists a compact set $C_{1}=A$ of positive measure
such that the Jacobian $J_{F_{1}}$ is negative (equal $-1$) in almost every point of $C_{1}$; $F_{1}$ is a diffeomorphism outside $C_{1}$.

Moreover, for any $x,y\in Q$,
$$
|F_{1}(x)-F_{1}(y)|+|F_{1}^{-1}(x)-F_{1}^{-1}(y)|\leq C(n,\psi)\psi(|x-y|)\leq C(n,\phi)\phi(|x-y|),
$$
with the last inequality true for some $C(n,\phi)$ thanks to (3), Lemma \ref{better}. We can assume that $C(n,\phi)\geq 6$.

Assume thus, for the inductive step of the construction, that we already have a homeomorphism $F_{k}:Q\to Q$ satisfying
\begin{enumerate}
\item[a)] $F_{k}$ is equal to identity near $\partial Q$,
\item[b)] $F_k$ has the Lusin property,
\item[c)] there exists a compact set $C_{k}\subset Q$ such that
\begin{itemize}
\item $|C_{k}|>0$,
\item for a.e. $x\in C_{k}$, the homeomorphism $F_{k}$ is approximately differentiable at $x$ and its Jacobian $J_{F_{k}}(x)$ is negative,
\item $F_{k}$ is a diffeomorphism outside $C_{k}$.
\end{itemize}
\item[d)]  for any $x,y\in Q$,
\begin{align*}
|F_{k}(x)-F_{k}(y)|&\leq \Big(1+\frac{1}{2}+\cdots+\frac{1}{2^{k}}\Big)C(n,\phi)\phi(|x-y|),\\
|F_{k}^{-1}(x)-F_{k}^{-1}(y)|&\leq \Big(1+\frac{1}{2}+\cdots+\frac{1}{2^{k}}\Big)C(n,\phi)\phi(|x-y|).
\end{align*}
\end{enumerate}

In the inductive step we construct $F_{k+1}$ by modifying $F_k$ in sufficiently
small balls outside $C_{k}$, enlarging the set of points at which the approximate Jacobian is negative.
To this end, let us choose an open set $\Omega$ such that $\Omega\Subset Q\setminus C_{k}$,
$|\Omega|>\frac{3}{4} |Q\setminus C_{k}|$ and $|F_{k}(\Omega)| >\frac{3}{4} |Q\setminus F_{k}(C_{k})|$.

Set
$$
M=1+\|DF_{k}\|_{\Omega}+\|(DF_{k})^{-1}\|_{\Omega}+\|D^{2}F_{k}\|_{\Omega}<\infty.
$$
We fill $\Omega$ with a finite family of pairwise disjoint balls $\{B_{i}\}_{i\in I}=\{B(x_i,r_i)\}_{i\in I}$, $B_{i}\Subset\Omega$,
with sufficiently small radii $r_i$, i.e. $r_{i}<\rho$, with $\rho$ to be determined later, in such a way that
$$
\big|\bigcup_{i}B_{i}\big|>\frac{2}{3}|\Omega|\quad\text{ and }\quad \big|\bigcup_{i}F_{k}(B_{i})\big|>\frac{2}{3}|F_{k}(\Omega)|.
$$

Assume $\rho<\left(10 (M+1)^{2}2^{k+1}\right)^{-1}$.
Then we can modify  $F_{k}$ according to Lemma \ref{goodapprox} inside each of the balls $B_{i}$, obtaining an approximately differentiable homeomorphism $\tilde{F}_{k}$ which coincides with $F_{k}$ on some neighborhoods of $\partial Q$ and of $C_{k}$ and which is affine on each of the balls $D_{i}$, concentric with $B_{i}$, but with radius $r_{i}/2$.
Namely $\tilde{F}_k(x)=T_i(x)=F_k(x_i)+DF_k(x_i)(x-x_i)$ for $x\in D_i$. Obviously, $\tilde{F}_k$ is an orientation preserving diffeomorphism in $Q\setminus C_k$.
In particular, the affine maps $T_i$ are orientation preserving, with $J_{T_i}=J_{F_k}(x_i)>0$.

Next, inscribe an $n$-dimensional cube $Q_{i}$, with edges parallel to the coordinate directions,
into each of the balls $D_{i}$ and denote by $S_{i}:Q_{i}\to Q$ the standard similarity (scaling + translation) transformation between $Q_{i}$ and the unit cube $Q$.  We define
$$
F_{k+1}=
\begin{cases}
\tilde{F}_{k} &\text{ in } Q\setminus \bigcup_{i\in I} Q_{i}\\
\tilde{F}_{k} \circ S_{i}^{-1}\circ \Phi \circ S_{i} & \text{ in each of the }Q_{i}.
\end{cases}
$$
Then in each of the cubes $Q_{i}$ there is a positive measure Cantor set $A_{i}$ such that
\begin{itemize}
\item $S_{i}^{-1}\circ\Phi\circ S_{i}|_{A_{i}}$ is a symmetry,
\item outside $A_{i}$,  $S_{i}^{-1}\circ\Phi\circ S_{i}$ is a diffeomorphism of $Q_{i}$ onto itself, equal to the identity near $\partial Q_{i}$.
\end{itemize}

The homeomorphism $F_{k+1}$ on $A_{i}$ is a composition of a symmetry
(orientation-reversing affine map with the Jacobian equal $-1$) and the orientation preserving affine tangent mapping $T_{i}$ with the Jacobian equal
$J_{F_{k}}(x_{i})>0$.
Thus $J_{F_{k+1}}$ is negative in all the
density points of $A_{i}$ (in fact it is constant on the set of density points of $A_{i}$: for a.e. $x\in A_{i}$ it is equal to $-J_{F_{k}}(x_{i})$).

Note that for each $i$, $|A_i|/|Q_i|=|A|/|Q|=|A|$ so
$|A_{i}|=|A||Q_{i}|$, where $A$ is the  Cantor set constructed in Proposition~\ref{prop:main}.  Note that $|A|$
depends on the dimension $n$ and the choice of $\psi$ and hence
it depends on $n$ and $\phi$ only. Thus
\begin{equation}
\label{eq:exhaust}
\begin{split}
\big| \bigcup_{i} A_{i} \big|= & \sum_{i}|A_{i}|=|A|\sum_{i}|Q_{i}|=C(n,\phi)\sum_{i}|B_{i}|\\
& \geq C(n,\phi)\frac{2}{3}|\Omega|\geq C(n,\phi)\frac{2}{3}\cdot\frac{3}{4}|Q\setminus C_{k}|=\frac{1}{2}\, C(n,\phi)|Q\setminus C_{k}|.
\end{split}
\end{equation}
We define $C_{k+1}=C_{k}\cup \bigcup_{i} A_{i}$. It easily follows from \eqref{eq:exhaust} that
\begin{equation}
\label{B52}
\Big|Q\setminus\bigcup_k C_k\Big|=0.
\end{equation}
Clearly, the homeomorphism $F_{k+1}$ has the corresponding properties a), b) and c).
The next step is proving the continuity estimate d) for $F_{k+1}$ and $F_{k+1}^{-1}$.
The arguments are repetitive, thus we provide the details in the most complex cases and sketch the remaining ones.

We first prove the estimate for the intermediate step $\tilde{F}_{k}$.
For points $x,y\in Q$ we have to consider several cases.

Let $x,y\not \in \bigcup_{i} B_{i}$. Then we have, by assumption,
$$
|\tilde{F}_{k}(x)-\tilde{F}_{k}(y)|=|F_{k}(x)-F_{k}(y)|\leq \Big(1+\frac{1}{2}+\cdots+\frac{1}{2^{k}}\Big)C(n,\phi)\phi(|x-y|).
$$
Similarly, for any $x,y\not \in \bigcup_{i}F_{k}(B_{i})$,
$$
|\tilde{F}^{-1}_{k}(x)-\tilde{F}^{-1}_{k}(y)|=|F_{k}^{-1}(x)-F_{k}^{-1}(y)|\leq \Big(1+\frac{1}{2}+\cdots+\frac{1}{2^{k}}\Big)C(n,\phi)\phi(|x-y|).
$$

The remaining cases when at least one of the points is in a ball $B_i$ are more difficult.
By (d), Lemma~\ref{goodapprox}, the mapping $\tilde{F_{k}}$ is bi-Lipschitz in each of the closed balls $\overline{B}_{i}$, with
bi-Lipschitz constant $\Lambda=2M$.

Assume that $\rho$ is such that for $t<2\Lambda\rho$ we have  $t/\phi(t)<\Lambda^{-1}2^{-k-1}$ (recall that by (b),
Lemma~\ref{ap:estpsi}, $t/\phi(t)\to 0$ with $t\to 0^{+}$).
Note that $\rho$ depends on $n$, $\phi$ and $k$ only.

Let $x,y\in \overline{B}_{i}$. Then $|x-y|<2\rho<2\Lambda\rho$, thus
\begin{equation}
\label{inaball}
|\tilde{F}_{k}(x)-\tilde{F}_{k}(y)|\leq \Lambda|x-y|\leq \frac{\phi(|x-y|)}{ 2^{k+1}}.
\end{equation}
In the same way we prove that if $x,y\in F_{k}(\overline{B}_{i})$, then $|x-y|<2\Lambda\rho$ and again
$$
|\tilde{F}^{-1}_{k}(x)-\tilde{F}^{-1}_{k}(y)|\leq \Lambda|x-y|\leq \frac{\phi(|x-y|)}{ 2^{k+1}}.
$$

Let $x\in B_{i}$, $y\in B_{j}$, $i\neq j$. We note that the segment $[x,y]$ must intersect the boundaries of
$B_{i}$ and $B_{j}$; let $z\in [x,y]\cap\partial B_{i}$ and $w\in [x,y]\cap\partial B_{j}$.
We have then $\tilde{F}_{k}(z)=F_{k}(z)$ and $\tilde{F}_{k}(w)=F_{k}(w)$; thus, by \eqref{inaball}, the inductive assumption,
and the fact that $\phi$ is increasing,
\begin{equation}
\label{intwoballs}
\begin{split}
|\tilde{F}_{k}(x)-\tilde{F}_{k}(y)| &\leq |\tilde{F}_{k}(x)-\tilde{F}_{k}(z)| +|\tilde{F}_{k}(z)-\tilde{F}_{k}(w)| +|\tilde{F}_{k}(w)-\tilde{F}_{k}(y)|\\
&\leq
\frac{\phi(|x-z|)}{ 2^{k+1}}+|F_{k}(z)-F_{k}(w)|+ \frac{\phi(|w-y|)}{ 2^{k+1}}\\
&\leq
\Big(1+\frac{1}{2}+\cdots+\frac{1}{2^{k}}\Big)C(n,\phi)\phi(|x-y|)+ \frac{\phi(|x-y|)}{ 2^{k}}.
\end{split}
\end{equation}
The estimates for $\tilde{F}_{k}^{-1}$ when $x\in {F}_k(B_i)$, $y\in {F}_k(B_j)$, $i\neq j$, are done in exactly the same manner.

Let $x\in B_{i}$ and $y\not\in \bigcup_i B_{i}$. This case is settled in the same way:
we decompose the segment $[x,y]$ into $[x,z]\cup[z,y]$, where $z\in\partial B_{i}$ and use the triangle inequality;
also the estimates for $\tilde{F}_{k}^{-1}$ are done in exactly the same manner.

Ultimately, we obtain that for any $x,y\in Q$
\begin{equation}
\label{estfortilde}
|\tilde{F}_{k}(x)-\tilde{F}_{k}(y)| \leq \Big(1+\frac{1}{2}+\cdots+\frac{1}{2^{k}}\Big)C(n,\phi)\phi(|x-y|)+ \frac{\phi(|x-y|)}{ 2^{k}},
\end{equation}
and the same estimate for $\tilde{F}_{k}^{-1}$.

Now, let us turn to $F_{k+1}$ and $F_{k+1}^{-1}$.

Outside the union of the cubes $Q_{i}$ we have that $F_{k+1}$
coincides with $\tilde{F}_{k}$ so d) follows from \eqref{estfortilde} and the fact that $C(n,\phi)>2$.

For $x,y\in Q_{i}$ we have, by Proposition~\ref{prop:main},
\begin{equation}
\label{r001}
\begin{split}
|F_{k+1}(x)-F_{k+1}(y)|&=|\tilde{F}_{k}\circ S_{i}^{-1}\circ \Phi\circ S_{i}(x)-\tilde{F}_{k}\circ S_{i}^{-1}\circ \Phi\circ S_{i}(y)|\\
&\leq C(n,\psi)\Lambda \lambda^{-1}\psi(\lambda |x-y|)\leq C(n,\psi)\Lambda\psi(|x-y|),
\end{split}
\end{equation}
where $\lambda>1$ is the scaling factor between $Q_{i}$ and the unit cube.
We used here the fact that $\tilde{F}_k$ is $\Lambda$-Lipschitz on $Q_i$.
The last inequality follows from (a), Lemma~\ref{ap:estpsi}.
In the same way we prove that whenever $x,y\in F_{k}(Q_{i})$, we have
\begin{equation}
\label{r002}
|F^{-1}_{k+1}(x)-F^{-1}_{k+1}(y)|\leq \frac{C(n,\psi)}{\lambda}\psi(\lambda \Lambda|x-y|)\leq C(n,\psi)\Lambda\psi(|x-y|).
\end{equation}
Assume, in addition to the previous restrictions on $\rho$, that
$$
\frac{\psi(t)}{\phi(t)}<\frac{1}{C(n,\psi)\Lambda 2^{k}}
\qquad
\text{for $t<2\Lambda\rho$,}
$$
where $C(n,\psi)$ is the same constant as the one in inequalities \eqref{r001} and \eqref{r002}. It is easy to see that we can find
such $\rho$ depending on $n$, $\phi$ and $k$ only. Recall that if $x,y\in Q_i\subset\overline{B}_i$ or
$x,y\in F_k(Q_i)\subset F_k(\overline{B}_i)$, then $|x-y|<2\Lambda\rho$ so
\begin{equation}
\label{zpsidofi}
|F_{k+1}(x)-F_{k+1}(y)|\leq C(n,\psi)\Lambda\psi(|x-y|)\leq \frac{\phi(|x-y|)}{2^{k}}
\end{equation}
and similarly
$$
|F_{k+1}^{-1}(x)-F_{k+1}^{-1}(y)|\leq \frac{\phi(|x-y|)}{2^{k}}\, .
$$
These estimates imply d).

Assume now that $x\in Q_{i}$, $y\in Q_{j}$. Then the segment $[x,y]$ intersects $\partial Q_{i}$ and $\partial Q_{j}$;
let $z\in [x,y]\cap\partial Q_{i}$ and $w\in [x,y]\cap\partial Q_{j}$. Note that $F_{k+1}(z)=\tilde{F}_{k}(z)$ and $F_{k+1}(w)=\tilde{F}_{k}(w)$.
Proceeding exactly as in \eqref{intwoballs}, we use the triangle inequality,  \eqref{estfortilde} and \eqref{zpsidofi} to get
\begin{eqnarray*}
\lefteqn{|F_{k+1}(x)-F_{k+1}(y)|}\\
&\leq& \frac{\phi(|x-z|)}{2^k}
 +
\Big[\Big(1+\frac{1}{2}+\ldots+\frac{1}{2^k}\Big)C(n,\phi)\phi(|z-w|)+ \frac{\phi(|z-w|)}{2^k}\Big] +\frac{\phi(|w-y|)}{2^k}\\
& \leq &
\Big(1+\frac{1}{2}+\ldots+\frac{1}{2^{k+1}}\Big)C(n,\phi)\phi(|x-y|),
\end{eqnarray*}
because $3/2^k<C(n,\phi)/2^{k+1}$.

The same arguments prove the above estimate in the case when $x\in Q_{i}$, $y\not\in \bigcup_j Q_{j}$.
The estimate for the inverse function $F_{k+1}^{-1}$
follows in exactly the same manner. This completes the proof of the inequalities in d) for $k+1$.

We proved that for all $x,y\in Q$ and all $k$
\begin{equation}
\label{last}
|F_k(x)-F_k(y)|+|F_k^{-1}(x)-F_k^{-1}(y)|\leq 4C(n,\phi)\phi(|x-y|).
\end{equation}
One can show as in \cite{GH} that the sequence $\{F_k\}$ converges in the uniform metric \eqref{um} to a homeomorphism $F$ that has
all properties listed in Theorem~\ref{thm:main}, but the Lusin property (N). However, instead of referring to \cite{GH} we will use a straightforward argument
showing convergence of a {\em subsequence} of $\{ F_k\}$. We proved in \eqref{last} that both families $\{F_k\}$ and $\{ F_k^{-1}\}$ are equicontinuous.
Since the families are bounded, it follows from the Arzel\`a-Ascoli theorem that subsequences converge uniformly
$F_{k_i}\rightrightarrows F$ and $F_{k_i}^{-1}\rightrightarrows G$. Since $\id = F_{k_i}\circ F_{k_i}^{-1}\rightrightarrows F\circ G$
we conclude that $F\circ G=\id$ so $F$ is a homeomorphism and that $F_{k_i}^{-1}\rightrightarrows F^{-1}$. Clearly $F|_{\partial Q}=\id$.
Passing to the limit in \eqref{last} gives
$$
|F(x)-F(y)|+|F^{-1}(x)-F^{-1}(y)|\leq 4C(n,\phi)\phi(|x-y|).
$$
It follows from the construction that for $m\geq k$, $F_m|_{C_k}=F_k|_{C_k}$ so $F|_{C_k}=F_k|_{C_k}$.
Since on the set $C_k$ the mapping $F=F_k$ has the Lusin property, it is approximately differentiable, and
$J_{F_k}=J_F<0$ a.e. in $C_k$, it follows from \eqref{B52} that $F$ is approximately differentiable a.e. with
$J_F<0$ a.e.

Moreover, $F$ has the Lusin property on the set $\bigcup_k C_k$, and it remains to show that
$|F(Q\setminus \bigcup_k C_k)|=0$. Equivalently, we need to show that
$$
|Q\setminus F(C_k)|\to 0
\quad
\text{as $k\to\infty$.}
$$
Since $C_{k+1}=C_k\cup\bigcup_i A_i$, it suffices to show that there is a constant $C>0$, depending on $n$ and $\phi$ only, such that
$$
\Big| F\Big(\bigcup_i A_i\Big)\Big| \geq C|Q\setminus F(C_k)|=C|Q\setminus F_k(C_k)|.
$$
Recall that
$$
\Big|\bigcup_i F_k(B_i)\Big|> \frac{2}{3} |F_k(\Omega)| > \frac{1}{2} |Q\setminus F_k(C_k)|,
$$
so it suffices to show that $|F(A_i)|\geq C|F_k(B_i)|$.

Let $E_i=\overline{B}(x_i,2r_i)$. According to (b), Lemma~\ref{goodapprox},
$F_k(B_i)=\tilde{F}_k(B_i)\subset T_i(E_i)$. Since $|T_i(E_i)|=2^n|T_i(B_i)|$, we get $|F_k(B_i)|\leq 2^n |T_i(B_i)|$.

Observe also that $F(A_i)=T_i(A_i)$, so
$$
\frac{|F(A_i)|}{|T_i(B_i)|} =
\frac{|T_i(A_i)|}{|T_i(B_i)|} =
\frac{|A_i|}{|B_i|} =
C(n)\, \frac{|A_i|}{|Q_i|}=C(n)|A|= C(n,\phi)
$$
and hence
$$
|F(A_i)|=C(n,\phi)|T_i(B_i)|\geq
2^{-n} C(n,\phi) |F_k(B_i)|.
$$
This completes the proof of the Lusin property of the homeomorphism $F$.
\hfill $\Box$

\end{document}